\tikzstyle{vertex}=[circle, draw, inner sep=0pt, minimum size=6pt]
\newcommand{\vertex}{\node[vertex]}
\newtheorem{thm}{Theorem}[section]
\newtheorem{lem} [thm]{Lemma}
\newtheorem{prop} [thm]{Proposition}
\theoremstyle{definition} 
\newtheorem{ex}[thm]{Example}
\newtheorem{rmk}[thm] {Remark}
\newtheorem{defn}[thm]{Definition}
\raggedbottom \pagestyle{myheadings} \hbadness = 10000 \tolerance = 10000
\numberwithin{equation}{section}
\newcommand\sgn{\operatorname{sgn}}
\newcommand\bdim{\operatorname{bdim}}
\newcommand\sbdim{\operatorname{sbdim}}
\begin{document}
	\label{'ubf'}
	\setcounter{page}{1} 

	\markboth {\hspace*{-9mm} \centerline{\footnotesize \sc
			Vector Valued Switching in Signed Graphs}
	}
	{ \centerline {\footnotesize \sc Hameed, Albin,  Germina, Zaslavsky
			}
	}
	\begin{center}
		{
			\Large \textbf{Vector Valued Switching in Signed Graphs
				}
			}

			\bigskip
			 Shahul Hameed K \footnote{\small Department of Mathematics, K M M Government\ Women's\ College, Kannur - 670004,\ Kerala,  \ India.  \textbf{Email:}\texttt{ shabrennen@gmail.com}}\, Albin Mathew \footnote{\small Department of Mathematics, Central University of Kerala, Kasaragod - 671316,\ Kerala,\ India. \textbf{Email:}\texttt{ albinmathewamp@gmail.com, srgerminaka@gmail.com}}\, Germina K A $^{2}$\, Thomas Zaslavsky \footnote{\small Department of Mathematics and Statistics, Binghamton University (SUNY), Binghamton, NY 13902-6000, USA. \textbf{Email:} \texttt{zaslav@math.binghamton.edu}}
			\\

\end{center}

\thispagestyle{empty}
\begin{abstract}
	A signed graph is a graph with edges marked positive and negative; it is unbalanced if some cycle has negative sign product.  We introduce the concept of vector valued switching function in signed graphs, which extends the concept of switching to higher dimensions. Using this concept, we define balancing dimension and strong balancing dimension for a signed graph, which can be used for a new classification of degree of imbalance of unbalanced signed graphs. We provide bounds for the balancing and strong balancing dimensions, and calculate these dimensions for some classes of signed graphs.
\end{abstract}

\textbf{Keywords:} Signed graph, vector valued switching, balancing dimension.

\textbf{Mathematics Subject Classification (2020):}  Primary 05C22.


\section{Motivational Background and Introduction}

The concept of switching in signed graphs was introduced by Zaslavsky in \cite{tz1}. Given a signed graph $\Sigma=(G,\sigma)$ where $G=(V,E)$ is the underlying graph (which we assume is simple) and $\sigma:E\rightarrow \{-1,1\}$ is the signing function, by switching   $\Sigma$  to a signed graph $\Sigma^{\zeta}=(G,\sigma^\zeta)$ using a switching function $\zeta:V\rightarrow \{-1,1\}$, we mean the edge signing of $\Sigma^{\zeta}$ satisfies the condition $\sigma^\zeta(uv)=\sigma(uv)\zeta(u)\zeta(v)$.  Switching does not change the signs of cycles.  We say two signed graphs $\Sigma_1$ and $\Sigma_2$ are switching equivalent  if one of them can be switched from the other.

Given a cycle $C$ in a signed graph, the sign of this cycle $\sigma(C)$ is defined as the product of the edge signs on it. If $\sigma(C)=1$, we say that the cycle $C$ is positive. A signed graph is said to be balanced if all cycles in it are positive. There are various characterizations of balanced signed graphs; one of them is by switching (e.g., see \cite{tz}), as follows.
\begin{thm} \label{thm1} A signed graph $\Sigma=(G,\sigma)$ is balanced if and only if it can be switched to an all positive signed graph.
\end{thm}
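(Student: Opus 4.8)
The plan is to prove the two implications separately, with the reverse direction being essentially immediate and the forward direction requiring an explicit construction of the switching function from a spanning tree.

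For the ($\Leftarrow$) direction I would argue as follows. An all-positive signed graph is trivially balanced, since every cycle has edge-sign product $+1$. Invoking the fact recorded just before the theorem that switching does not change the signs of cycles, if $\Sigma$ switches to an all-positive graph then $\Sigma$ has exactly the same cycle signs as that all-positive graph; hence every cycle of $\Sigma$ is positive, i.e., $\Sigma$ is balanced.

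For the ($\Rightarrow$) direction, assume $\Sigma$ is balanced; the idea is to build $\zeta$ from a spanning tree. First I would reduce to the connected case by treating each component separately (choose a spanning forest and root each tree). Fix a spanning tree $T$ of a component, with root $r$, set $\zeta(r)=+1$, and for every other vertex $v$ define $\zeta(v)=\prod_{f}\sigma(f)$, where the product runs over the edges $f$ on the unique $r$–$v$ path in $T$. I would then verify $\sigma^\zeta\equiv+1$ in two cases. For a tree edge $uv$ with $v$ the child of $u$ we have $\zeta(v)=\zeta(u)\sigma(uv)$, so $\sigma^\zeta(uv)=\sigma(uv)\zeta(u)\zeta(v)=\sigma(uv)^2\zeta(u)^2=+1$. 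For a non-tree edge $uv$, the crucial observation is that $\zeta(u)\zeta(v)$ equals the product of edge signs along the unique $u$–$v$ path in $T$, since the shared initial segments of the two root-paths (down to their lowest common ancestor) contribute factors that square to $1$. Consequently $\sigma^\zeta(uv)=\sigma(uv)\cdot\zeta(u)\zeta(v)=\sigma(C)$, where $C$ is the fundamental cycle formed by the edge $uv$ together with that tree path; balance of $\Sigma$ gives $\sigma(C)=+1$, so $\sigma^\zeta(uv)=+1$.

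Since every edge of $G$ is either a tree edge or a non-tree edge, this shows $\Sigma^\zeta$ is all-positive, completing the forward direction. The argument has no serious obstacle, as this is a foundational result; the one step deserving care is the identity $\zeta(u)\zeta(v)=\prod_{f}\sigma(f)$ over the tree $u$–$v$ path and its identification with the sign of the fundamental cycle, which is exactly where the balance hypothesis is consumed. I would also remark that the choice of root and spanning tree is immaterial, since any two such switching functions differ by one that is constant on each component, and such functions leave all edge signs unchanged.
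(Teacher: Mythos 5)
Your proof is correct. Note that the paper does not actually prove this statement at all: it is quoted as a known characterization of balance with a citation to Zaslavsky's \emph{Signed graphs} paper, so there is no internal proof to compare against. What you have written is the standard argument from the literature: the reverse direction follows from switching-invariance of cycle signs, and the forward direction builds $\zeta(v)$ as the sign product along the path from a root in a spanning tree (handling components separately), verifies tree edges by cancellation of squares, and consumes the balance hypothesis exactly on the fundamental cycles of the non-tree edges. All steps check out, including the key identity $\zeta(u)\zeta(v)=\prod_f \sigma(f)$ over the tree $u$--$v$ path, so your write-up would serve as a complete self-contained proof of the cited theorem.
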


An undirected graph $G$ can be considered as an all positive signed graph.  This is a more restrictive property than balance because no switching is required to make all edges positive, but balance is still quite restrictive because it requires that all cycle signs be positive.  Indeed, balanced signed graphs are the signed graphs that are the most like unsigned graphs.

If $\Sigma = (G,\sigma)$ is a signed graph, then $-\Sigma = (G,-\sigma)$ is the same signed graph with all signs reversed.  For example, $-G$ means $G$ with all negative edges.  We say $\Sigma$ is antibalanced when $-\Sigma$ is balanced.  It is easy to see that $-(\Sigma^\zeta) = (-\Sigma)^\zeta$, so by Theorem \ref{thm1} $\Sigma$ is antibalanced if and only if it switches to all negative signs.

Motivated from the above theorem, as the product $\zeta(u)\zeta(v)$ can be viewed as the inner product of $\zeta(u)$ and $\zeta(v)$ on $\mathbb{R}$,  we frame the following definitions to classify unbalanced signed graphs extending the concept of switching to a higher dimension. In what follows, $\Omega=\{-1,0,1\} $ and the inner product used is the same as that on $\mathbb{R}^k$ restricted to $\Omega^k$.
\begin{defn}[Vector Valued Switching or $k$-Switching]  Let $\Sigma=(G,\sigma)$ be a given signed graph  where $G=(V,E)$.  A vector valued switching function is a function $\zeta:V\rightarrow \Omega^k \subset \mathbb{R}^k$ such that $\langle \zeta(u), \zeta (v)\rangle\neq 0$ for all edges $uv\in E$.  The switched signed graph $\Sigma^\zeta=(G,\sigma^\zeta)$ has the signing $$\sigma^\zeta(uv)=\sigma(uv) \sgn (\langle \zeta(u), \zeta (v)\rangle).$$
The switching considered so far in the literature, from now onwards will be referred to as $1$-switching and the generalized switching introduced here will be mentioned as $k$-switching.
\end{defn}

\begin{rmk}\label{zero}
The zero vector is a possible value of $\zeta(v)$, but only if $v$ is an isolated vertex.  Although an isolated vertex in $1$-switching can take the value $0$ so that the usual switching is not precisely the same as $1$-switching, the difference is not important.
\end{rmk}

The product of a $1$-switching function $\eta$ and a $k$-switching function $\zeta$ is defined by $(\eta\zeta)(v) = \eta(v)\zeta(v)$.

\begin{thm}\label{thm2}
	Let $k,k'\geq1$.  A $k$-switching function $\zeta$, a $k'$-switching function $\zeta'$, and a $1$-switching function $\eta$ satisfy $(\Sigma^\zeta)^{\zeta'}=(\Sigma^{\zeta'})^\zeta$, $\Sigma^{\eta\zeta}=(\Sigma^\eta)^\zeta=(\Sigma^\zeta)^\eta$, $\Sigma^{-\zeta}=\Sigma^\zeta$, $(\Sigma^\zeta)^\zeta=\Sigma$, and $-(\Sigma^\zeta) = (-\Sigma)^\zeta$.
\end{thm}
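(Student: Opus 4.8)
The plan is to reduce every one of the five identities to a pointwise computation at an arbitrary edge $uv$, exploiting the single structural fact that $k$-switching alters each edge sign by an independent multiplicative factor $\sgn(\langle\zeta(u),\zeta(v)\rangle)\in\{-1,+1\}$, together with the multiplicativity $\sgn(ab)=\sgn(a)\sgn(b)$. Before any such computation, however, I would first settle well-definedness, since this is the only place where anything could fail: the three derived switching functions appearing in the statement --- the function $\zeta'$ regarded as acting on $\Sigma^\zeta$, the product $\eta\zeta$, and $-\zeta$ --- must each satisfy the nonvanishing condition $\langle\cdot,\cdot\rangle\neq0$ on every edge in order for the switched graphs to be defined at all.

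For well-definedness the key observation is that the validity condition $\langle\zeta(u),\zeta(v)\rangle\neq0$ depends only on the underlying graph $G$, not on the signing $\sigma$; hence any $k'$-switching function valid on $\Sigma$ is automatically valid on $\Sigma^\zeta$, because the two signed graphs share the underlying graph. For the product I would compute $\langle(\eta\zeta)(u),(\eta\zeta)(v)\rangle=\eta(u)\eta(v)\langle\zeta(u),\zeta(v)\rangle=\pm\langle\zeta(u),\zeta(v)\rangle\neq0$, so $\eta\zeta$ is a legitimate $k$-switching function; and $\langle-\zeta(u),-\zeta(v)\rangle=\langle\zeta(u),\zeta(v)\rangle\neq0$, so $-\zeta$ is as well. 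With these checks in hand, all five switched graphs in the theorem are defined.

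The five identities then follow in the order given, each as a short scalar calculation at a generic edge. For $(\Sigma^\zeta)^{\zeta'}=(\Sigma^{\zeta'})^\zeta$, both sides carry the edge sign $\sigma(uv)\sgn(\langle\zeta(u),\zeta(v)\rangle)\sgn(\langle\zeta'(u),\zeta'(v)\rangle)$, and commutativity of scalar multiplication finishes it. For $\Sigma^{\eta\zeta}=(\Sigma^\eta)^\zeta=(\Sigma^\zeta)^\eta$ I would use $\sgn\bigl(\eta(u)\eta(v)\langle\zeta(u),\zeta(v)\rangle\bigr)=\eta(u)\eta(v)\,\sgn(\langle\zeta(u),\zeta(v)\rangle)$, valid because $\eta(u)\eta(v)=\pm1$, which recovers exactly the $1$-switching factor $\eta(u)\eta(v)$ and matches all three expressions. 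The identity $\Sigma^{-\zeta}=\Sigma^\zeta$ is immediate since the relevant inner products coincide (and it is also the special case $\eta\equiv-1$ of the mixed identity, since switching by the constant $1$-function $-1$ leaves $\Sigma$ unchanged). The involution $(\Sigma^\zeta)^\zeta=\Sigma$ uses $\sgn(\langle\zeta(u),\zeta(v)\rangle)^2=1$, which is precisely where the nonvanishing hypothesis is essential. Finally $-(\Sigma^\zeta)=(-\Sigma)^\zeta$ follows by pulling the global sign reversal through the per-edge factor.

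The main obstacle, such as it is, is purely the bookkeeping of well-definedness rather than any genuine computation: once one has verified that $\eta\zeta$ and $-\zeta$ meet the nonvanishing condition and that a switching function valid on $\Sigma$ remains valid on any signed graph over the same $G$, the five identities reduce to one-line verifications using multiplicativity of $\sgn$ and the fact that $\eta(u)\eta(v)=\pm1$. I would therefore present the well-definedness checks once up front and then dispatch the identities in sequence.
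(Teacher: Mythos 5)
Your proposal is correct, and in fact the paper offers no proof of this theorem at all --- it is stated as a routine consequence of the definitions and the text moves directly on. Your argument is the natural (essentially the only) verification: reduce each identity to a per-edge computation with $\sigma^\zeta(uv)=\sigma(uv)\sgn(\langle\zeta(u),\zeta(v)\rangle)$, using multiplicativity of $\sgn$ and the fact that $\eta(u)\eta(v)=\pm1$ on edges. All five computations check out: commutativity of the two sign factors gives $(\Sigma^\zeta)^{\zeta'}=(\Sigma^{\zeta'})^\zeta$; $\sgn(\eta(u)\eta(v)\langle\zeta(u),\zeta(v)\rangle)=\eta(u)\eta(v)\sgn(\langle\zeta(u),\zeta(v)\rangle)$ gives the mixed identity; invariance of the inner product under negating both arguments gives $\Sigma^{-\zeta}=\Sigma^\zeta$; $\sgn(\cdot)^2=1$ gives the involution; and pulling the global negation through the edge factor gives $-(\Sigma^\zeta)=(-\Sigma)^\zeta$. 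Your attention to well-definedness --- that $\zeta'$ remains valid on $\Sigma^\zeta$ because validity depends only on the underlying graph, that $\eta\zeta$ lands in $\Omega^k$ and satisfies the nonvanishing condition on edges, and that $-\zeta$ does too --- is a genuine addition beyond what the paper records, and it correctly handles the one subtlety (isolated vertices may carry the zero vector, but the nonvanishing condition is only imposed on edges). No gaps.
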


\section{Balancing and Strong Balancing Dimensions}

\subsection{Definition and elementary properties}

\begin{defn}[Balancing Dimension]  Let $\Sigma=(G,\sigma)$ be a given signed graph  where $G=(V,E)$. We say that the balancing dimension of $\Sigma$ is $k$ and write it as $\bdim(\Sigma)=k$, if $k\ge 1$ is the least integer such that a vector valued switching function $\zeta:V\rightarrow \Omega^k\subset \mathbb{R}^k$ switches $\Sigma$ to an all positive signed graph.  
We call such a $k$-switching function $\zeta$ a positive $k$-switching function (briefly a $k$-positive function) for $\Sigma$.
\end{defn}

\begin{lem}\label{up}
A signed graph $\Sigma$ has a $k$-positive function for every $k\geq\bdim(\Sigma)$.
\end{lem}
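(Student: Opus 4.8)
The plan is to show that if $\Sigma$ has a positive $k$-switching function $\zeta:V\to\Omega^k$, then for any $k'>k$ we can construct a positive $k'$-switching function. Since $\bdim(\Sigma)$ is by definition the least $k$ with a positive $k$-switching function, the claim reduces to proving that the existence of a positive function in dimension $k$ upgrades to one in every higher dimension.

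The natural construction is to embed $\Omega^k$ into $\Omega^{k'}$ by padding each vector with zeros. That is, given a positive $k$-switching function $\zeta$, define $\zeta':V\to\Omega^{k'}$ by $\zeta'(v)=(\zeta(v),0,\dots,0)$, appending $k'-k$ trailing zero coordinates. The key observation is that this padding preserves inner products exactly: for any $u,v\in V$ we have $\langle \zeta'(u),\zeta'(v)\rangle = \langle \zeta(u),\zeta(v)\rangle$, because the extra coordinates are all zero and contribute nothing to the sum.

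The steps I would carry out are as follows. First, verify that $\zeta'$ is a legitimate $k'$-switching function, i.e.\ that $\langle \zeta'(u),\zeta'(v)\rangle\neq 0$ for every edge $uv\in E$; this is immediate from the inner product identity above together with the fact that $\zeta$, being a switching function, already satisfies $\langle\zeta(u),\zeta(v)\rangle\neq 0$ on edges. I must also check the caveat in Remark \ref{zero}: padding could send a nonzero vector to one that is still nonzero (it does, since we keep the original coordinates), so no vertex with an incident edge is assigned $0$, and the construction is consistent with the convention that only isolated vertices may take the value $0$. Second, since $\sgn\langle\zeta'(u),\zeta'(v)\rangle = \sgn\langle\zeta(u),\zeta(v)\rangle$ for all edges, we get $\sigma^{\zeta'}(uv)=\sigma^{\zeta}(uv)$ for every edge, so $\Sigma^{\zeta'}=\Sigma^{\zeta}$, which is all positive by hypothesis. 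Hence $\zeta'$ is a positive $k'$-switching function.

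I do not anticipate a genuine obstacle here, as the argument is essentially the observation that zero-padding is an inner-product-preserving embedding $\Omega^k\hookrightarrow\Omega^{k'}$. The only point requiring a little care is the bookkeeping around the zero vector and Remark \ref{zero}: one should confirm that padding never accidentally creates a forbidden zero value on a vertex incident to an edge, and that the definition of a $k'$-switching function (the nonvanishing-on-edges condition) is met. Once that is checked, the statement follows for every $k\ge\bdim(\Sigma)$ by taking $\zeta$ to be a positive $\bdim(\Sigma)$-switching function and applying the embedding with $k'=k$.
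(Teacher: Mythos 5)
Your proof is correct and is essentially identical to the paper's: both pad a positive $\bdim(\Sigma)$-switching function with trailing zero coordinates and observe that inner products (hence edge signs and the nonvanishing condition) are preserved. Your extra care about Remark \ref{zero} and the zero vector is fine but not needed beyond what the padding argument already gives.
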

\begin{proof}
Let $j<k$ and let $\zeta: V \rightarrow \Omega^j$ be a $j$-positive function for $\Sigma$.  Define $\zeta'(v) = (\zeta_1(v),\ldots,\zeta_j(v),0,\ldots,0) \in \Omega^k$.  Then $\zeta'$ is a $k$-positive function for $\Sigma$.  In particular, take $j=\bdim(\Sigma)$.
\end{proof}

\begin{defn}[Strong Balancing Dimension]  Let $\Sigma=(G,\sigma)$ be a given signed graph  where $G=(V,E)$. We say that the strong balancing dimension of $\Sigma$ is $k$ and write it as $\sbdim(\Sigma)$, if $k\ge 1$ is the least integer such that there is an injective vector valued switching function $\zeta:V\rightarrow \Omega^k$ which switches $\Sigma$ to an all positive signed graph.
However, in case $\Sigma$ is all positive, we define $\sbdim(\Sigma)=1$.

We call such a $k$-switching function $\zeta$ an injective positive $k$-switching function (briefly, a strongly $k$-positive function) for $\Sigma$.
\end{defn}

We chose to study injectivity because by allowing higher dimensional switching we open the door to new variations on the definition of a switching function, and injectivity seemed an interesting and attractive such variation.

\begin{thm}\label{thm4}
$\bdim(\Sigma)=1$ if and only if $\Sigma$ is balanced.
Contrastingly, $\sbdim(\Sigma)=1$ if and only if $\Sigma = K_1$, $K_1 \cup K_1$, $K_1 \cup K_1 \cup K_1$, $-K_2$, or $-K_2 \cup K_1$.
\end{thm}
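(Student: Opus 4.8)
The plan is to prove the two assertions separately, since the first is essentially a restatement of Theorem~\ref{thm1} while the second is a short combinatorial case analysis. For the balancing-dimension claim, note that $\bdim(\Sigma)\ge 1$ always, so $\bdim(\Sigma)=1$ is equivalent to the existence of a $1$-positive function, that is, a function $\zeta\colon V\to\Omega$ with $\langle\zeta(u),\zeta(v)\rangle\ne 0$ on every edge which switches $\Sigma$ to all positive. By Remark~\ref{zero} the value $0$ can occur only at isolated vertices, so on the non-isolated part $\zeta$ takes values in $\{-1,+1\}$ and is an ordinary switching function; thus a $1$-positive function is exactly a classical switching to all positive. Theorem~\ref{thm1} then yields the equivalence with balance immediately.

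For the strong-balancing-dimension claim I would prove both implications, starting with the easy (existence) direction. Here I would simply exhibit strongly $1$-positive functions. For the edgeless graphs $K_1$, $K_1\cup K_1$, $K_1\cup K_1\cup K_1$ any injection of the vertex set into $\Omega$ works, since there are no edges to constrain and nothing to switch; for $-K_2$ take $\zeta=(+1,-1)$ on the two endpoints; and for $-K_2\cup K_1$ take $\zeta=(+1,-1,0)$ with $0$ placed on the isolated vertex. In each case $\zeta$ is injective, satisfies the edge condition, and turns the single negative edge positive, so $\sbdim=1$.

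The substance is the converse. Assume $\sbdim(\Sigma)=1$, witnessed by an injective $\zeta\colon V\to\Omega$. First, injectivity forces $|V|\le|\Omega|=3$. Second, by Remark~\ref{zero} the label $0$ may be used only on an isolated vertex. Third, on any edge $uv$ the labels $\zeta(u),\zeta(v)$ are distinct and nonzero, so $\{\zeta(u),\zeta(v)\}=\{+1,-1\}$ and $\langle\zeta(u),\zeta(v)\rangle=-1$; since $\sigma^\zeta(uv)=+1$ this forces $\sigma(uv)=-1$, so \emph{every} edge of $\Sigma$ is negative. Fourth, because the labels $+1$ and $-1$ are each used at most once, every edge must join the unique $+1$-vertex to the unique $-1$-vertex, and as $G$ is simple there is at most one edge. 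Running through $|V|\in\{1,2,3\}$ with at most one (necessarily negative) edge, and placing the optional $0$-label only on an isolated vertex, leaves exactly the five listed graphs.

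The main obstacle, and the point I would treat most carefully, is the interaction with the all-positive convention $\sbdim(\Sigma)=1$. The third step above shows that a signed graph possessing a positive edge admits \emph{no} injective $1$-switching to all positive, so graphs such as $+K_2$ are not reached by the genuine injective argument at all; one must therefore check that the convention is applied compatibly, verifying that the only all-positive graphs meeting the dimension-$1$ injectivity constraint are the edgeless graphs on at most three vertices, which already appear in the list. Reconciling this definitional wrinkle cleanly—rather than the routine enumeration of the four cases—is where the care is needed.
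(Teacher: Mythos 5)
Your proposal is correct and follows essentially the same route as the paper: the paper's entire proof is a two-sentence sketch of precisely your case analysis (injectivity forces at most three vertices, a $0$-label can only sit on an isolated vertex, and any two non-isolated vertices must be negatively adjacent), with the $\bdim$ claim and the exhibition of witnesses for the five listed graphs left implicit. The definitional wrinkle you flag at the end is real and the paper silently ignores it: taken literally, the all-positive convention would give $\sbdim(+K_2)=1$ even though $+K_2$ is not in the list, so the theorem must be read (as you read it) as characterizing the existence of an injective $1$-positive witness, under which reading your enumeration is complete and agrees with the paper's.
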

\begin{proof} 
If $\sbdim(\Sigma)=1$, then $\Sigma$ has at most $3$ vertices and if it has $3$, the one with $\zeta(v)=0$ must be isolated.  If there are two non-isolated vertices, they must be negatively adjacent.
\end{proof}
We note that $\sbdim(\Sigma)=1$ has a few more examples than would exist if $1$-switching were identical to ordinary switching.

\begin{ex}\label{ex1}
$\bdim(C_4^-)=\sbdim(C_4^-)=2$.
Take $\zeta(v_1)=(-1,0)$, $\zeta(v_2)=(1,-1)$, $\zeta(v_3)=(0,-1)$ and $\zeta(v_4)=(-1,-1)$ to see that both dimensions are $2$.
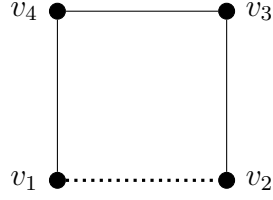
\begin{figure}[h]
	\centering
	\begin{tikzpicture}[x=0.75cm, y=0.75cm]
		\vertex[fill] (v) at (0,0) [label=left:$v_1$] {};
		\vertex[fill] (w) at (3,0) [label=right:$v_2$] {};
		\vertex[fill] (x) at (0,3) [label=left:$v_4$] {};
		\vertex[fill] (y) at (3,3) [label=right:$v_3$] {};
		\path[very thick, dotted]
		(v) edge (w)

		;
		\path
		(x) edge (v)
		(w) edge (y)
		(y) edge (x)
		;
	\end{tikzpicture}
	\caption{The negative cycle $C_4^-$}
	\label{fig1}
\end{figure}
\end{ex}

\begin{thm}\label{thm5} $\bdim$ is $1$-switching invariant.
\end{thm}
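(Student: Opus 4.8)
The plan is to prove that if $\Sigma'=\Sigma^\eta$ for some $1$-switching function $\eta\colon V\to\{-1,1\}$, then $\bdim(\Sigma')=\bdim(\Sigma)$. Because $1$-switching equivalence is symmetric --- indeed $\Sigma=(\Sigma')^\eta$, using the involution $(\Sigma^\eta)^\eta=\Sigma$ recorded in Theorem \ref{thm2} --- it suffices to establish a single inequality, say $\bdim(\Sigma')\le\bdim(\Sigma)$, and then invoke it a second time with the roles of $\Sigma$ and $\Sigma'$ interchanged to get the reverse inequality, hence equality.

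To obtain that inequality, I would begin with a $k$-positive function $\zeta\colon V\to\Omega^k$ for $\Sigma$, where $k=\bdim(\Sigma)$, so that $\Sigma^\zeta$ is all positive. The natural candidate for a $k$-positive function of $\Sigma'$ is the product $\eta\zeta$, defined by $(\eta\zeta)(v)=\eta(v)\zeta(v)$. First I would confirm that $\eta\zeta$ is a legitimate vector valued switching function on the common underlying graph: for each edge $uv$ we have $\langle(\eta\zeta)(u),(\eta\zeta)(v)\rangle=\eta(u)\eta(v)\langle\zeta(u),\zeta(v)\rangle$, and since $\eta(u)\eta(v)=\pm1$ while $\langle\zeta(u),\zeta(v)\rangle\ne0$, the inner product stays nonzero.

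The heart of the argument is the identity $(\Sigma')^{\eta\zeta}=\Sigma^\zeta$. I would deduce it purely from the switching algebra of Theorem \ref{thm2}: applying $\Sigma^{\eta\zeta}=(\Sigma^\eta)^\zeta$ with $\Sigma'$ in place of $\Sigma$ gives $(\Sigma')^{\eta\zeta}=((\Sigma')^\eta)^\zeta$, and $(\Sigma')^\eta=(\Sigma^\eta)^\eta=\Sigma$ by the involution, so $(\Sigma')^{\eta\zeta}=\Sigma^\zeta$, which is all positive. Alternatively, the same fact follows from the one-line edge computation $\sigma^\eta(uv)\,\sgn\langle(\eta\zeta)(u),(\eta\zeta)(v)\rangle=\sigma(uv)\,\eta(u)\eta(v)\cdot\eta(u)\eta(v)\,\sgn\langle\zeta(u),\zeta(v)\rangle=\sigma^\zeta(uv)$, where the two factors $\eta(u)\eta(v)$ collapse because $(\eta(u)\eta(v))^2=1$. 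Either route shows $\eta\zeta$ is a $k$-positive function for $\Sigma'$, whence $\bdim(\Sigma')\le k=\bdim(\Sigma)$.

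Since everything in sight reduces to the group-like identities already established in Theorem \ref{thm2}, I do not anticipate a genuine obstacle; the only point demanding care is the sign bookkeeping --- verifying that multiplying $\zeta$ by the scalars $\eta(v)$ preserves the nonvanishing-inner-product condition and that the paired factors of $\eta(u)\eta(v)$ cancel to leave $\sigma^\zeta$ untouched. It is worth flagging that this reasoning does not transfer verbatim to $\sbdim$, because $\eta\zeta$ need not be injective even when $\zeta$ is (for example $\zeta(u)=-\zeta(v)$ together with $\eta(u)=-\eta(v)$ identifies two values), so the injective version would require a separate treatment.
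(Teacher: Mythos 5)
Your proposal is correct and is essentially the paper's own argument: the paper proves this theorem in one line as ``a consequence of Theorem \ref{thm2},'' and your write-up is precisely the unpacking of that consequence --- using the identities $\Sigma^{\eta\zeta}=(\Sigma^\eta)^\zeta$ and $(\Sigma^\eta)^\eta=\Sigma$ to show that $\eta\zeta$ is a $k$-positive function for $\Sigma^\eta$ whenever $\zeta$ is one for $\Sigma$, plus the symmetry argument to upgrade the inequality to equality. Your closing observation that the argument breaks for $\sbdim$ because $\eta\zeta$ need not be injective is also consistent with the paper, which gives an explicit counterexample showing $\sbdim$ is not $1$-switching invariant.
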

\begin{proof} This is a consequence of Theorem \ref{thm2}.
\end{proof}

\begin{rmk}
	Strong balancing dimension need not be $1$-switching invariant. Consider the signed graph shown in Figure~\ref{fig2}.
\begin{figure}[h!]
	\centering
	\[\begin{tikzpicture}[x=0.6cm, y=0.6cm]
		\vertex[fill] (a) at (-1,6) [label=left:$v_1$] {};
		\vertex[fill] (b) at (3,6) [label=right:$v_2$] {};
		\vertex[fill] (c) at (1,4) [label=left:$v_3$] {};
		\vertex[fill] (d) at (-1,2) [label=left:$v_4$] {};
		\vertex[fill] (e) at (3,2) [label=right:$v_5$] {};
		\vertex[fill] (f) at (0,0) [label=left:$v_6$] {};
		\vertex[fill] (g) at (2,0) [label=right:$v_7$] {};
		\path[very thick, dotted]
		(a) edge (c)
		(b) edge (c)
		(d) edge (c)
		(e) edge (c)
		(d) edge (f)
		(e) edge (g)
		(f) edge (g)

		;
	\end{tikzpicture}\]
	\caption{A signed graph $\Sigma$ with $\sbdim(\Sigma)=3$}
	\label{fig2}
\end{figure}
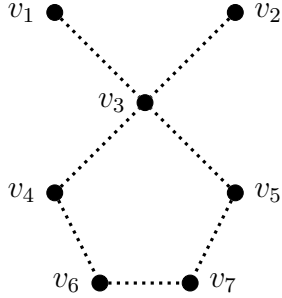

 For any non-zero $\alpha \in\Omega^2$, the cardinality of the set $\{\beta\in\Omega^2:\langle\alpha,\beta\rangle<0\}$ is 3. Thus, there does not exist an injective switching function from $V(\Sigma)$ to $\Omega^2$ that switches $\Sigma$ to all positive. Consequently, $\sbdim(\Sigma)>2$. Now, the  switching function $\zeta:V(\Sigma)\rightarrow\Omega^3$ defined by $\zeta(v_1)=(1,0,0)$, $\zeta(v_2)=(0,0,1)$, $\zeta(v_3)=(-1,-1,-1)$, $\zeta(v_4)=(0,1,0)$, $\zeta(v_5)=(-1,1,1)$, $\zeta(v_6)=(1,-1,1)$ and $\zeta(v_7)=(1,1,-1)$ is injective, and switches $\Sigma$ to all positive. Hence, $\sbdim(\Sigma)=3$.

 Let $\eta$ be the $1$-switching function defined on $V(\Sigma)$ as follows: $\eta(v_1)=\eta(v_2)=-1$ and $\eta(v_3)=\eta(v_4)=\eta(v_5)=\eta(v_6)=\eta(v_7)=1$. The corresponding switched signed graph $\Sigma^\eta$ is shown in  Figure~\ref{fig3}.

 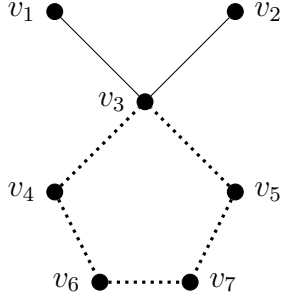
\begin{figure}[h]
 	\centering
 	\[\begin{tikzpicture}[x=0.6cm, y=0.6cm]
 		\vertex[fill] (a) at (-1,6) [label=left:$v_1$] {};
 		\vertex[fill] (b) at (3,6) [label=right:$v_2$] {};
 		\vertex[fill] (c) at (1,4) [label=left:$v_3$] {};
 		\vertex[fill] (d) at (-1,2) [label=left:$v_4$] {};
 		\vertex[fill] (e) at (3,2) [label=right:$v_5$] {};
 		\vertex[fill] (f) at (0,0) [label=left:$v_6$] {};
 		\vertex[fill] (g) at (2,0) [label=right:$v_7$] {};
 		\path[very thick, dotted]

 		(d) edge (c)
 		(e) edge (c)
 		(d) edge (f)
 		(e) edge (g)
 		(f) edge (g)

 		;
 		\path
 	(a) edge (c)
 	(b) edge (c)
 	;
 	\end{tikzpicture}\]
 	\caption{The $1$-switched signed graph $\Sigma^\eta$ with $\sbdim(\Sigma^\eta)=2$}
 	\label{fig3}
 \end{figure}
  A simple computation shows that there is an injective switching function $\zeta':V(\Sigma^\eta)\rightarrow\Omega^2$ defined by $\zeta'(v_1)=(1,1)$, $\zeta'(v_2)=(1,-1)$, $\zeta'(v_3)=(1,0)$, $\zeta'(v_4)=(-1,1)$, $\zeta'(v_5)=(-1,-1)$, $\zeta'(v_6)=(0,-1)$ and $\zeta'(v_7)=(0,1)$.
 Consequently, $\sbdim(\Sigma^\eta)=2$ and hence $\sbdim$ is not $1$-switching invariant.
\end{rmk}

\begin{thm}\label{thm6}
	For any subgraph $\Sigma'$ of $\Sigma$, $\bdim(\Sigma') \leq\bdim(\Sigma)$ and $\sbdim(\Sigma') \leq\sbdim(\Sigma)$.
\end{thm}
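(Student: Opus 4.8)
The plan is to prove both inequalities by restriction: a positive (respectively strongly positive) switching function for $\Sigma$, once restricted to the vertex set of $\Sigma'$, should remain a positive (respectively strongly positive) switching function for $\Sigma'$ in the same dimension. Write $\Sigma'=(G',\sigma')$ with $G'=(V',E')$, so that $V'\subseteq V$, $E'\subseteq E$, and $\sigma'=\sigma|_{E'}$; the entire argument rests on the observation that every edge of $\Sigma'$ is an edge of $\Sigma$ carrying the same sign.

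For the balancing dimension, I would set $k=\bdim(\Sigma)$, take a $k$-positive function $\zeta\colon V\rightarrow\Omega^k$ for $\Sigma$, and define $\zeta'=\zeta|_{V'}$. Then for each edge $uv\in E'\subseteq E$ one has $\langle\zeta'(u),\zeta'(v)\rangle=\langle\zeta(u),\zeta(v)\rangle\neq0$ and $(\sigma')^{\zeta'}(uv)=\sigma'(uv)\sgn(\langle\zeta'(u),\zeta'(v)\rangle)=\sigma(uv)\sgn(\langle\zeta(u),\zeta(v)\rangle)=\sigma^\zeta(uv)=+1$. One should also confirm that $\zeta'$ is a legitimate vector valued switching function in the sense of Remark~\ref{zero}: if $\zeta'(v)=0$ for some $v\in V'$, then $\zeta(v)=0$, so $v$ is isolated in $\Sigma$ and hence in the subgraph $\Sigma'$. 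Thus $\zeta'$ is a $k$-positive function for $\Sigma'$, and since $\bdim(\Sigma')$ is the least dimension admitting such a function, $\bdim(\Sigma')\leq k=\bdim(\Sigma)$.

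For the strong balancing dimension the same restriction works and additionally preserves injectivity, since a restriction of an injective map is injective. The only genuine care is forced by the exceptional convention $\sbdim=1$ for all positive signed graphs, so I would split into cases. If $\Sigma$ is all positive, then so is every subgraph $\Sigma'$, whence $\sbdim(\Sigma')=1=\sbdim(\Sigma)$. Otherwise $\sbdim(\Sigma)=k$ is witnessed by an injective $k$-positive function $\zeta$, and its restriction $\zeta'$ is an injective $k$-positive function for $\Sigma'$; if $\Sigma'$ is not all positive this gives $\sbdim(\Sigma')\leq k$ directly, while if $\Sigma'$ is all positive then $\sbdim(\Sigma')=1\leq k$ because $\sbdim$ is always at least $1$.

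The argument is essentially bookkeeping, and I expect the main (minor) obstacle to be exactly this interplay with the exceptional definition of $\sbdim$ for all positive graphs: the clean ``restrict the witnessing injective function'' step does not by itself cover an all positive subgraph, and one must fall back on the trivial bound $\sbdim\geq1$. A secondary point worth checking is that restriction respects the zero-vector constraint of Remark~\ref{zero}, which it does because passing to a subgraph can only delete edges and hence can only turn non-isolated vertices into isolated ones, never the reverse.
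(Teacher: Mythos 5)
Your proof is correct. The paper states Theorem~\ref{thm6} without any proof at all, evidently regarding it as immediate, and your restriction argument is precisely the argument being left implicit: restrict a witnessing $k$-positive (respectively injective $k$-positive) function to $V(\Sigma')$, noting that edges of $\Sigma'$ keep their signs and constraints. Your explicit case analysis around the exceptional convention $\sbdim = 1$ for all positive signed graphs handles a genuine wrinkle in the paper's definition that the paper itself never addresses, so if anything your write-up is more careful than what the paper offers.
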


 Our next theorem shows how the balancing dimension of a disconnected signed graph depends on its connected components.

\begin{thm}
	The balancing dimension of a disconnected graph is the largest balancing dimension of its connected components.
\end{thm}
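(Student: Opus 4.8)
The plan is to prove the equality by bounding $\bdim(\Sigma)$ from both sides. Write $\Sigma=\Sigma_1\cup\Sigma_2\cup\cdots\cup\Sigma_m$ as the disjoint union of its connected components and set $k=\max_i\bdim(\Sigma_i)$; the goal is to show $\bdim(\Sigma)=k$.

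First I would establish the lower bound $\bdim(\Sigma)\geq k$. Each component $\Sigma_i$ is a subgraph of $\Sigma$, so Theorem \ref{thm6} gives $\bdim(\Sigma_i)\leq\bdim(\Sigma)$ for every $i$; taking the maximum over $i$ yields $k\leq\bdim(\Sigma)$.

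Next I would establish the upper bound $\bdim(\Sigma)\leq k$ by gluing together positive functions on the components. Since $\bdim(\Sigma_i)\leq k$, Lemma \ref{up} supplies a $k$-positive function $\zeta_i:V(\Sigma_i)\to\Omega^k$ for each component. The point of invoking Lemma \ref{up} is precisely to lift all the $\zeta_i$ to the common dimension $k$, so that they take values in the same space and can be combined. Because the components are pairwise vertex-disjoint, the map $\zeta:V(\Sigma)\to\Omega^k$ with $\zeta|_{V(\Sigma_i)}=\zeta_i$ is well defined. Every edge of $\Sigma$ lies within a single component, so both the nonvanishing condition $\langle\zeta(u),\zeta(v)\rangle\neq 0$ and the positivity $\sigma^\zeta(uv)=+1$ for each edge $uv$ follow immediately from the corresponding property of the relevant $\zeta_i$. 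Thus $\zeta$ is a $k$-positive function for $\Sigma$ and $\bdim(\Sigma)\leq k$.

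Combining the two inequalities gives $\bdim(\Sigma)=k$. I do not expect a genuine obstacle here: the argument rests on the single structural fact that a disconnected signed graph has no edges joining distinct components, so switching functions on different components never interact and may be assembled freely. The only point requiring care is dimension matching, which is exactly what Lemma \ref{up} resolves; if the components were given only their own balancing-dimension functions in possibly different dimensions, they could not be glued directly.
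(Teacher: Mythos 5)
Your proof is correct and follows essentially the same route as the paper's: the lower bound via Theorem \ref{thm6} applied to each component, and the upper bound by using Lemma \ref{up} to lift each component's positive function to the common dimension $k$ and gluing them across the vertex-disjoint components. Your added remark about why Lemma \ref{up} is needed (dimension matching) is a nice clarification but does not change the argument.
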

\begin{proof}
	Let $\Sigma=(G, \sigma)$ be a signed graph having $t$ components $\Sigma_1,\Sigma_2, \dots, \Sigma_t$.
Let $n$ be the largest balancing dimension of any component $\Sigma_i$.
By Theorem~\ref{thm6}, $\bdim(\Sigma)\geq \bdim(\Sigma_i)$ for all $i$.  Thus, $\bdim(\Sigma)\geq n$.  Since $n\geq\bdim(\Sigma_i)$ for every $i$, by Lemma \ref{up} there exists an $n$-positive function $\zeta_i$  for every component $\Sigma_i$.  Define $\zeta:V(\Sigma)\rightarrow\Omega^n$ by $\zeta(v)=\zeta_i(v)$ if the component that contains vertex $v$ is $\Sigma_i$.
Since each $\zeta_i$ switches $\Sigma_i$ to all positive, $\zeta$ switches $\Sigma$ to all positive. Thus, $\bdim(\Sigma)=n$.
\end{proof}
\begin{rmk}
	The above result will not hold for strong balancing dimension. As an illustration, let us consider $\Sigma$ as the signed graph that consists of $3$ disjoint copies of the negative cycle $C_3^-$. Since $\Sigma$ has $9$ vertices, there does not exist an injective switching function $\zeta:V(\Sigma)\rightarrow\Omega^3$.
\end{rmk}
\begin{thm}\label{thm9}
		Adding pendant edges to a signed graph will not change its balancing dimension.
\end{thm}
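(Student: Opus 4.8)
The plan is to prove the equality by two inequalities. Since the original signed graph $\Sigma$ is a subgraph of the graph $\Sigma'$ obtained by attaching the pendant edges, Theorem~\ref{thm6} immediately gives $\bdim(\Sigma)\le\bdim(\Sigma')$. All of the work lies in the reverse inequality $\bdim(\Sigma')\le\bdim(\Sigma)$, which I would establish by extending a $k$-positive function from $\Sigma$ to $\Sigma'$ without raising the dimension $k=\bdim(\Sigma)$.

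It suffices to treat the addition of a single pendant edge, the general statement then following by induction on the number of added edges (a leaf attached to a previously added leaf is just one more pendant edge on the current graph). So let $w$ be a new leaf joined to a vertex $u$ of the current graph by an edge of sign $\sigma(uw)$, and let $\zeta:V\to\Omega^k$ be a $k$-positive function for $\Sigma$. The key observation is that, for a \emph{nonzero} vector $\zeta(u)\in\Omega^k$, we have $\langle\zeta(u),\,\sigma(uw)\zeta(u)\rangle=\sigma(uw)\,\langle\zeta(u),\zeta(u)\rangle$, whose sign is exactly $\sigma(uw)$; and since $\Omega$ is closed under negation, $\sigma(uw)\zeta(u)\in\Omega^k$. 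Hence I would set $\zeta(w):=\sigma(uw)\,\zeta(u)$, so that $\langle\zeta(u),\zeta(w)\rangle\ne0$ and $\sigma^\zeta(uw)=\sigma(uw)\,\sgn\langle\zeta(u),\zeta(w)\rangle=+1$, while no other edge sign is affected. This extended $\zeta$ is then a $k$-positive function for $\Sigma'$, giving $\bdim(\Sigma')\le k$.

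The one case that needs care — and the main (though minor) obstacle — is when $\zeta(u)=0$, which by Remark~\ref{zero} can happen only if $u$ is isolated in $\Sigma$; then $\langle\zeta(u),\zeta(u)\rangle=0$ and the construction above degenerates. Here I would first redefine $\zeta(u)$ to be any fixed nonzero vector of $\Omega^k$, for instance $(1,0,\dots,0)$. Because $u$ carries no edge of $\Sigma$, this modification leaves $\zeta$ a $k$-positive function for $\Sigma$, and now $\zeta(u)\ne0$, so the formula $\zeta(w)=\sigma(uw)\zeta(u)$ applies as before. Combining the two inequalities yields $\bdim(\Sigma')=\bdim(\Sigma)$, as claimed.
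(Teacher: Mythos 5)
Your proof is correct, and since the paper states Theorem~\ref{thm9} without any proof at all, your argument in fact supplies the missing one: subgraph monotonicity (Theorem~\ref{thm6}) gives $\bdim(\Sigma)\le\bdim(\Sigma')$, and extending a $k$-positive function to each new leaf $w$ by $\zeta(w)=\sigma(uw)\zeta(u)$ gives the reverse inequality, which is plainly the argument the authors intend (compare the extension remark in Example~\ref{ex2}). Your separate treatment of the degenerate case $\zeta(u)=0$ at an isolated attachment vertex, justified via Remark~\ref{zero} and fixed by reassigning $\zeta(u)$ a nonzero vector, is a genuine detail that the paper glosses over entirely.
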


\subsection{Bounds for balancing dimensions}\label{sec1}

We begin with upper bounds.  Let $\Sigma=(G,\sigma)$ be a signed graph with $n$ vertices $v_1,v_2,\dots, v_n$ and $m$ edges $e_1, e_2, \dots, e_m$.
For each edge $e_k=v_iv_j$, we define a vector $\textbf{b}(e_k)=\begin{pmatrix}
b_{1k}\\
\vdots	\\
b_{nk}
\end{pmatrix} \in \mathbb{R}^{n\times1}$, whose $i^{\text{th}}$ and $j^{\text{th}}$ entries are $b_{ik}=\pm 1$ and $b_{jk}=b_{ik}\sigma(e_k)$, respectively, and whose other entries are $0$. We now define $B$ as the $n\times m$ matrix whose $k^{th}$ column is the column vector $\textbf{b}(e_k)$; that is, $$B=\begin{bmatrix}
	\textbf{b}(e_1)&\textbf{b}(e_2)  &\cdots  & \textbf{b}(e_m)
\end{bmatrix} = \begin{pmatrix}
b_{ij}
\end{pmatrix}_{n\times m}.$$
The matrix $B$ is precisely an incidence matrix of the signed graph $-\Sigma$.
 We now define a switching function $\mu: V(\Sigma)\rightarrow \Omega^m$ by $$\mu(v_i)=(b_{i1}, b_{i2},\dots, b_{im})$$
 for $i=1,2,\dots, n$. Then, the function $\mu$ satisfies the following properties:
 \begin{description}
 	\item [Property (i):] $\langle \mu(v_i), \mu (v_j)\rangle = \sigma(v_iv_j)$ for all edges $v_iv_j$ in $\Sigma$.
 	\item[Property (ii):] $\|\mu(v_i)\|^2=d(v_i)$ for all vertices $v_i$ in $\Sigma$.
 \end{description}
By Property (i),
\begin{align*}
	\sigma^\mu(v_iv_j) &=\sigma(v_iv_j)\sgn(\langle \mu(v_i), \mu (v_j)\rangle)\\
	&=\sigma(v_iv_j)^2
	=1.
\end{align*}

Thus, for every signed graph $\Sigma$ with $m$ edges, there always exists a switching function $\mu: V(\Sigma)\rightarrow \Omega^m$ that switches $\Sigma$ to all positive, but $\mu$ is not always injective.  This is the first step towards the following theorem.

\begin{thm}\label{thm7}
	Let $\Sigma$ be a signed graph with $m$ edges.  It satisfies the inequalities $1\leq\bdim(\Sigma)\leq\sbdim(\Sigma)$ and $\bdim(\Sigma)\leq m.$  Furthermore, $\sbdim(\Sigma)\leq m$ if $\Sigma$ has at most one isolated vertex and no component that is a positive edge.
	\end{thm}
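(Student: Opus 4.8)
The plan is to dispose of the chain $1\le\bdim(\Sigma)\le\sbdim(\Sigma)$ and of $\bdim(\Sigma)\le m$ quickly, then concentrate on the conditional bound $\sbdim(\Sigma)\le m$. The inequality $\bdim(\Sigma)\ge1$ is immediate from the definition. For $\bdim(\Sigma)\le\sbdim(\Sigma)$, if $\Sigma$ is all positive then both dimensions equal $1$ (by Theorem~\ref{thm4} and the special clause in the definition of $\sbdim$); otherwise a strongly $k$-positive function realizing $\sbdim(\Sigma)=k$ is in particular a $k$-positive function, so $\bdim(\Sigma)\le k=\sbdim(\Sigma)$. The bound $\bdim(\Sigma)\le m$ is exactly what the construction preceding the theorem supplies: the function $\mu:V\to\Omega^m$ satisfies $\sigma^\mu\equiv1$ by Property~(i), so it is an $m$-positive function. (Throughout one assumes $m\ge1$, which is forced in any case by $\bdim\ge1$.)

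For the final assertion the strategy is to show that, under the two hypotheses, this same $\mu$ is moreover injective; since $\mu$ already switches $\Sigma$ to all positive, it then witnesses $\sbdim(\Sigma)\le m$. The crux is a combinatorial analysis of when two distinct vertices can share a $\mu$-value. Writing $\mu(v_i)=(b_{i1},\dots,b_{im})$, the support of $\mu(v_i)$---the set of indices $k$ with $b_{ik}\ne0$---is exactly the set of edges incident to $v_i$, and every nonzero entry is $\pm1$. Hence $\mu(v_i)=\mu(v_j)$ forces $v_i$ and $v_j$ to have identical incident-edge sets and matching signs on them.

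The key step is to exploit simplicity of $G$ to pin down when two vertices share an incident-edge set. If that common set is empty, both vertices are isolated; if it is nonempty, any edge in it is incident to both $v_i$ and $v_j$ and therefore equals $v_iv_j$, so the set is $\{v_iv_j\}$ and $\{v_i,v_j\}$ is a $K_2$ component. The isolated case is excluded by the hypothesis of at most one isolated vertex (two would both map to $\mathbf{0}$). In the $K_2$ case, if $e=v_iv_j$ has index $k$ then the two single nonzero entries are $b_{ik}$ and $b_{jk}=b_{ik}\sigma(e)$, which agree if and only if $\sigma(e)=+1$; this is excluded by the hypothesis of no positive-edge component. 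Therefore no two distinct vertices collide, $\mu$ is injective, and $\sbdim(\Sigma)\le m$.

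I expect the main obstacle to be the combinatorial characterization of the third paragraph: verifying that in a simple graph the only ways two vertices can carry the same incident-edge set are the isolated pair and the $K_2$ component, and then checking that it is precisely the sign condition $\sigma(e)=+1$ that separates a colliding $K_2$ from a non-colliding one. Everything else is routine bookkeeping with the incidence structure of $-\Sigma$ encoded in $B$.
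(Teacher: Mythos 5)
Your proposal is correct and follows essentially the same route as the paper: the chain $1\le\bdim\le\sbdim$ and $\bdim\le m$ come from the definitions and the incidence-based function $\mu$, and the conditional bound $\sbdim(\Sigma)\le m$ is obtained by showing $\mu$ is injective, with the same identification of the only two collision modes (a pair of isolated vertices, and a $K_2$ component with a positive edge). Your support-based argument for why equal $\mu$-values force equal incident-edge sets, and hence (by simplicity) either isolation or a $K_2$ component, is just a slightly more detailed write-up of the paper's closing observation that otherwise all vertices have distinct incident-edge sets.
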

\begin{proof}
The inequalities $1\leq\bdim(\Sigma)\leq\sbdim(\Sigma)$ follow from the definitions.  We have already shown that $\bdim(\Sigma)\leq m.$

In order to bound $\sbdim(\Sigma)$ we need to know when $\mu$ is not injective.  There are two ways $\mu$ can fail to be injective.  First, since $\mu(v)=\mathbf0$ if $v$ is isolated, $\mu(v)=\mu(w)$ if both $v$ and $w$ are isolated.  Second, if $G$ has a component $K_2$, then $\mu(u)=\sigma(uv)\mu(v)=$ the vector with $1$ in the position of edge $uv$ and $0$ in all other positions, so $\mu(u)=\mu(v)$ if $uv$ is a positive edge.  In all other cases, every vertex has a different set of incident edges so all vectors $\mu(v)$ are distinct.  This proves the third inequality.
 \end{proof}
\begin{ex}
Let $\Sigma$ be the unbalanced signed graph shown in Figure~\ref{fig4}.
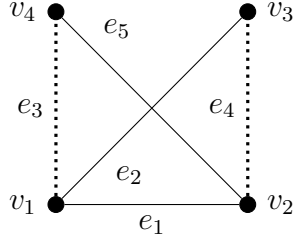
\begin{figure}
	\centering
	\[\begin{tikzpicture}[x=0.85cm, y=0.855cm]
		\vertex[fill] (u) at (0,0) [label=left:$v_1$] {};
		\vertex[fill] (v) at (3,0) [label=right:$v_2$] {};
		\vertex[fill] (w) at (0,3) [label=left:$v_4$] {};
		\vertex[fill] (x) at (3,3) [label=right:$v_3$] {};
		\path[very thick, dotted]
		(u) edge["$e_3$"] (w)
		(v) edge[ "$e_4$"] (x)

		;
		\path
		(x) edge[pos=.76, "$e_2$"] (u)
		(w) edge[pos=.17, "$e_5$"] (v)
		(v) edge ["$e_1$"] (u)
		;
	\end{tikzpicture}\]
	\caption{An illustration}
	\label{fig4}
\end{figure}
Then,
 \begin{align*} B&=\begin{bmatrix}
 		\textbf{b}(e_1)&\textbf{b}(e_2)&\textbf{b}(e_3)&\textbf{b}(e_4)&\textbf{b}(e_5)
 	\end{bmatrix}\\
	&=\begin{bmatrix}
		1&  1&  1& 0 & 0 \\
		\sigma(e_1)&  0& 0 & 1 & 1 \\
		0& \sigma(e_2) & 0 & \sigma(e_4) & 0 \\
		0& 0 & \sigma(e_3) & 0 & \sigma(e_5)
	\end{bmatrix} = \begin{bmatrix}
		1&  1&  1& 0 & 0 \\
		1&  0& 0 & 1 & 1 \\
		0& 1  & 0 & -1 & 0 \\
		0& 0 & -1 & 0 & 1
	\end{bmatrix}
\end{align*}

Define $\mu:V(\Sigma)\rightarrow \Omega^5$ as follows:
\begin{align*}
\mu(v_1)&=(1,1,1,0,0),\\
\mu(v_2)&=(1,0,0,1,1),\\
\mu(v_3)&=(0,1,0,-1,0),\\
\mu(v_4)&=(0,0,-1,0,1).
\end{align*}
 Then $\mu$ is injective and satisfies $\langle \mu(v_i), \mu (v_j)\rangle = \sigma(v_iv_j)$ for all edges $v_iv_j$ in $\Sigma$. Thus $\mu$ switches $\Sigma$ to all positive. Also, we can observe that  $\|\mu(v_i)\|^2=d(v_i)$ for all vertices $v_i$ in $\Sigma$.
\end{ex}

A lower bound exists in terms of the structure of the underlying graph $G$ of $\Sigma$.  The clique number $\omega(G)$ denotes the largest order of a clique in $G$.  Let $\lambda(k)$ denote the largest number of pairwise non-orthogonal lines generated by the vectors in $\Omega^k$.  For instance, $\lambda(2)=2$.  The largest number of pairwise non-orthogonal vectors in $\Omega^k$ equals $2\lambda(k)$.  Computation of $\lambda$ appears to be hard, but $\lambda$ does give a lower bound on balancing dimension.

\begin{thm}\label{cliquebound}
We have $\bdim(\Sigma) \geq \lambda^{-1}(\lceil\frac12\omega(G)\rceil)$.
\end{thm}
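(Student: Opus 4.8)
The plan is to convert the asserted inequality into a direct lower bound on $\lambda(\bdim(\Sigma))$ and then extract the required non-orthogonal lines from a maximum clique. First I would observe that $\lambda$ is nondecreasing: a family of pairwise non-orthogonal lines living in $\Omega^j$ embeds isometrically into $\Omega^k$ for $k>j$ by appending zero coordinates (the same padding trick as in Lemma~\ref{up}), and padding preserves all inner products, hence non-orthogonality. Consequently $\lambda^{-1}(n):=\min\{k:\lambda(k)\ge n\}$ is well defined, and by monotonicity the claim $\bdim(\Sigma)\ge\lambda^{-1}(\lceil\frac12\omega(G)\rceil)$ is equivalent to
\[
\lambda\bigl(\bdim(\Sigma)\bigr)\ \ge\ \Bigl\lceil\tfrac12\,\omega(G)\Bigr\rceil .
\]
So it suffices to produce, from a positive $k$-switching function with $k=\bdim(\Sigma)$, at least $\lceil\frac12\omega(G)\rceil$ pairwise non-orthogonal lines in $\Omega^k$. (The case $\omega(G)=1$ is trivial since then the bound reads $\bdim\ge1$.)

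Next, let $\zeta:V\to\Omega^k$ be a positive $k$-switching function with $k=\bdim(\Sigma)$, and let $Q$ be a clique of $G$ with $|Q|=\omega(G)\ge 2$. Since every two vertices of $Q$ are adjacent, the defining condition of a switching function forces $\langle\zeta(u),\zeta(v)\rangle\ne0$ for all distinct $u,v\in Q$; thus the vectors $\{\zeta(v):v\in Q\}$ are pairwise non-orthogonal, and each is nonzero because clique vertices are non-isolated (Remark~\ref{zero}). I would then pass from vectors to lines. The distinct lines among $\{\langle\zeta(v)\rangle:v\in Q\}$ are pairwise non-orthogonal, so their number is at most $\lambda(k)$; moreover each such line contains exactly two nonzero vectors of $\Omega^k$, namely $\pm\alpha$, since for a nonzero $\alpha\in\Omega^k$ the only scalars $t$ with $t\alpha\in\Omega^k\setminus\{\mathbf0\}$ are $t=\pm1$. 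This is precisely the stated fact that the largest number of pairwise non-orthogonal vectors in $\Omega^k$ equals $2\lambda(k)$, and it yields that the number of \emph{distinct} images $\zeta(v)$, $v\in Q$, is at most $2\lambda(k)$.

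The remaining step — and the one I expect to be the genuine obstacle — is to relate $|Q|=\omega(G)$ to the number of distinct images, i.e.\ to control the coincidences $\zeta(u)=\zeta(v)$ on $Q$. If $\zeta|_Q$ were injective we would get $\omega(G)\le 2\lambda(k)$, hence $\lambda(k)\ge\lceil\frac12\omega(G)\rceil$ and we would be done. But $\zeta|_Q$ need not be injective: a direct computation shows that $\zeta(u)=\zeta(v)$ forces $uv$ to be a positive edge and $\zeta(u)=-\zeta(v)$ forces $uv$ negative, so a balanced sub-clique can in principle collapse onto a single line. Controlling this collapse is exactly where the argument must do real work: I would aim to show either that $\zeta$ (or the choice of $Q$) can be modified so that the clique images become distinct without raising the dimension, or that the sign pattern of a clique nonetheless compels its $\omega(G)$ vertices to occupy at least $\lceil\frac12\omega(G)\rceil$ distinct lines. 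Once this collapse-control is established, the preceding non-orthogonality count closes the proof immediately.
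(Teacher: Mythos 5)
Your set-up and clique argument are exactly the paper's proof: the paper also takes a positive $k$-switching function $\zeta$, notes that the vectors $\zeta(v)$ on a clique of order $p$ are pairwise non-orthogonal, and concludes ``$p \leq 2\lambda(k)$, equivalently $k \geq \lambda^{-1}(\lceil p/2\rceil)$.'' The step you single out as the genuine obstacle --- that $\zeta$ need not be injective on the clique, so $p$ vertices need not produce $p$ distinct vectors --- is precisely the step the paper passes over in silence: the inequality $p\leq 2\lambda(k)$ is valid only for \emph{distinct} vectors (with repetitions allowed the count of pairwise non-orthogonal vectors is unbounded). So your proposal is not a complete proof, but it is an honest rendering of the same argument with the hole made visible.

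The more important point is that the hole cannot be patched, because the statement as written is false. The collapse you worry about actually occurs: as you computed, $\zeta(u)=\zeta(v)$ forces $uv$ positive and $\zeta(u)=-\zeta(v)$ forces $uv$ negative, so all the vertices of a clique whose images lie on a single line induce a \emph{balanced} sub-clique --- and for a balanced clique nothing prevents total collapse. Concretely, let $\Sigma$ be the all-positive complete graph $K_n$ with $n\geq 3$. It is balanced, so $\bdim(\Sigma)=1$ by Theorem \ref{thm4} (take $\zeta\equiv 1$, mapping the entire clique onto one line), yet $\lceil\frac12\omega(G)\rceil=\lceil n/2\rceil\geq 2$ while $\lambda(1)=1$, so $\lambda^{-1}(\lceil\frac12\omega(G)\rceil)\geq 2>\bdim(\Sigma)$. (Requiring $\Sigma$ unbalanced does not help: the disjoint union of all-positive $K_6$ with $C_4^-$ has $\bdim=2$ but clique number $6$, giving a claimed bound of $\lambda^{-1}(3)=3$.) What the clique argument really proves is $\lambda(\bdim(\Sigma))\geq t$, where $t$ is the minimum number of parts in a partition of a maximum clique into balanced sub-cliques; this recovers the stated bound exactly when every balanced sub-clique has at most two vertices, e.g., when the maximum clique is all-negative (any three of its vertices span an unbalanced triangle) --- which is the situation of Theorem \ref{thm8} and presumably what the authors had in mind. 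So your instinct that controlling the collapse is ``where the argument must do real work'' was correct, and in fact no argument can do that work in the stated generality; the theorem itself needs a sign hypothesis on the clique.
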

\begin{proof}
Let $\zeta$ be a positive $k$-switching function for $\Sigma$.  In a clique of order $p$ all the vectors $\zeta(v)$ for the vertices of the clique must be non-orthogonal.  Therefore, $p \leq 2\lambda(k)$, equivalently $k \geq \lambda^{-1}(\lceil p/2\rceil)$.   Considering a clique of maximum order gives the theorem.
\end{proof}

Negative triangles are important.

\begin{thm}\label{negtriangle}
If $\Sigma$ contains a negative triangle, then $\bdim(\Sigma)\geq3$.
\end{thm}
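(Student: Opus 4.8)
The plan is to reduce to a single negative triangle and then rule out balancing dimensions $1$ and $2$. Since a negative triangle $C_3^-$ occurs as a subgraph of $\Sigma$, Theorem~\ref{thm6} gives $\bdim(\Sigma)\ge\bdim(C_3^-)$, so it suffices to prove $\bdim(C_3^-)\ge3$. A negative triangle is not balanced, so Theorem~\ref{thm4} already excludes $\bdim(C_3^-)=1$; thus $\bdim(C_3^-)\ge3$ amounts to showing that \emph{no} switching function $\zeta:V(C_3^-)\to\Omega^2$ switches $C_3^-$ to all positive, and the whole content of the argument lies there.

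Suppose, for contradiction, that some such $\zeta$ exists, and let its three vertices carry the vectors $a,b,c\in\Omega^2$. These are nonzero by Remark~\ref{zero} (the triangle has no isolated vertex) and pairwise non-orthogonal since the vertices are pairwise adjacent. First I would record the parity constraint that balancing imposes. Multiplying the switched signs around the triangle gives
\[
\prod_{e\in C_3^-}\sigma^\zeta(e)=\Big(\prod_{e}\sigma(e)\Big)\,\sgn\langle a,b\rangle\,\sgn\langle b,c\rangle\,\sgn\langle c,a\rangle=\sigma(C_3^-)\,\sgn\langle a,b\rangle\,\sgn\langle b,c\rangle\,\sgn\langle c,a\rangle.
\]
Because the switched graph is all positive the left-hand side is $+1$, and $\sigma(C_3^-)=-1$, so balancing forces $\sgn\langle a,b\rangle\,\sgn\langle b,c\rangle\,\sgn\langle c,a\rangle=-1$; equivalently, an \emph{odd} number of the three inner products is negative.

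The heart of the proof, and the step I expect to be the main obstacle, is a lemma of the opposite flavour: any three pairwise non-orthogonal nonzero vectors in $\Omega^2$ have an \emph{even} number of negative pairwise inner products, i.e.\ $\sgn\langle a,b\rangle\,\sgn\langle b,c\rangle\,\sgn\langle c,a\rangle=+1$. To establish it I would invoke $\lambda(2)=2$: three pairwise non-orthogonal vectors cannot lie on three distinct lines, since three distinct lines would then be pairwise non-orthogonal, contradicting $\lambda(2)=2$. Hence the three vectors occupy at most two lines, and by pigeonhole two of them, say $a$ and $b$, are parallel, so $b=\pm a$. A short case check then finishes the computation. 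Write $s=\sgn\langle a,c\rangle$. If $b=a$, then $\langle a,b\rangle>0$ while $\langle b,c\rangle=\langle c,a\rangle=\langle a,c\rangle$, so the triple product is $(+1)\,s^2=+1$. If $b=-a$, then $\langle a,b\rangle<0$ and $\sgn\langle b,c\rangle=-s$, $\sgn\langle c,a\rangle=s$, so the triple product is $(-1)(-s)(s)=+1$ again.

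In either case the product equals $+1$, flatly contradicting the value $-1$ forced by balancing. Therefore no $\zeta:V(C_3^-)\to\Omega^2$ switches $C_3^-$ to all positive, so $\bdim(C_3^-)\ge3$, and by the subgraph reduction $\bdim(\Sigma)\ge3$ whenever $\Sigma$ contains a negative triangle. The only delicate point to get right is the reduction to ``at most two lines'' via $\lambda(2)=2$ together with the observation that parallel nonzero vectors are automatically non-orthogonal; once that is in place the sign bookkeeping is routine.
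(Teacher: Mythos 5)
Your proof is correct and follows essentially the same route as the paper's: reduce to $C_3^-$ via Theorem~\ref{thm6}, use pairwise non-orthogonality and $\lambda(2)=2$ to force the three vectors of a hypothetical $2$-positive function onto at most two lines of $\Omega^2$, and derive a contradiction with the triangle being unbalanced. The only cosmetic difference is the finishing step: the paper converts the $2$-switching into an explicit $1$-switching function (so that $C_3^-$ would be balanced, contradicting Theorem~\ref{thm4}), while you compute the parity of the product of inner-product signs around the triangle --- two phrasings of the same underlying fact.
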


\begin{proof}
Let $C_3^-$ be a negative triangle in $\Sigma$.  First we prove that $\bdim(C_3^-) \geq 3$.  Suppose $\zeta$ is a $2$-switching function that makes $C_3^-$ all positive.  All vectors $\zeta(v)$ are non-orthogonal because all vertices are adjacent.  There are exactly $4$ lines generated by $\Omega^2$ and only two of them can be chosen to be non-orthogonal. Without loss of generality, let the lines be $x_2=0$ and $x_1=x_2$.  The corresponding vectors are $(1,0)$, $(1,1)$, $(-1,0)$, and $(-1,-1)$.  The first pair has positive inner product and the second pair has positive inner product, but any one of the first pair has negative inner product with each of the second pair.  Therefore, the signs generated by $\zeta$ are the same as the signs generated by the $1$-switching function $\mu$ given by
$$
\mu(v) = \begin{cases}
+1 &\text{if } \zeta(v) \in \{(1,0),\ (1,1)\}, \\
-1 &\text{if } \zeta(v) \in \{(-1,0),\ (-1,-1)\}.
\end{cases}
$$
Thus, $\mu$ is a $1$-switching function that makes $C_3^-$ all positive, hence $\bdim(C_3^-) = 1$, i.e., $C_3^-$ is balanced, contrary to assumption.

Then $\bdim(\Sigma) \geq \bdim(C_3^-) \geq 3$ by Theorem \ref{thm6}.
\end{proof}

There is a simple lower bound on strong balancing dimension.

\begin{thm}\label{logbound}
For a signed graph with $n$ vertices, none of them being isolated, $\sbdim(\Sigma) \geq \log_3(n+1)$.
\end{thm}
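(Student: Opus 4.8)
The plan is to run a simple counting argument on the codomain of a strongly $k$-positive switching function, exploiting the fact that the zero vector is forbidden once every vertex has a neighbor. Let $k=\sbdim(\Sigma)$ and let $\zeta:V\rightarrow\Omega^k$ be an injective switching function that makes $\Sigma$ all positive. The domain has $n$ vertices and the codomain $\Omega^k$ has exactly $3^k$ elements, so a first crude injectivity bound would read $n\leq 3^k$. The point of the theorem is that we can do slightly better and gain the extra $+1$.

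The key step is to observe that under our hypothesis no vertex can be sent to $\mathbf 0$. Indeed, since none of the vertices is isolated, every vertex $v$ has an incident edge $vw$, and the definition of a vector valued switching function requires $\langle\zeta(v),\zeta(w)\rangle\neq0$. If we had $\zeta(v)=\mathbf 0$, then $\langle\zeta(v),\zeta(w)\rangle=0$, a contradiction. Hence $\zeta$ actually maps $V$ injectively into $\Omega^k\setminus\{\mathbf 0\}$, a set of cardinality $3^k-1$.

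Injectivity then forces $n\leq 3^k-1$, that is, $3^k\geq n+1$. Taking logarithms to base $3$ gives $k\geq\log_3(n+1)$, and since $k=\sbdim(\Sigma)$ this is exactly the claimed inequality.

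I do not expect any serious obstacle here; the only subtlety worth flagging is the role of the non-isolation hypothesis, which is precisely what lets us discard $\mathbf 0$ from the codomain and upgrade the naive bound $\sbdim(\Sigma)\geq\log_3 n$ to $\sbdim(\Sigma)\geq\log_3(n+1)$. It is also worth noting in passing that this bound is purely a cardinality constraint and does not use the sign structure beyond the prohibition of the zero value, which explains why it is stated for strong (injective) balancing dimension rather than for ordinary balancing dimension.
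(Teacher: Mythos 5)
Your proof is correct and is essentially the same counting argument as the paper's: both exclude the zero vector (using the non-isolation hypothesis together with the requirement $\langle\zeta(u),\zeta(v)\rangle\neq 0$ on edges), then use injectivity into the $3^k-1$ nonzero vectors of $\Omega^k$ to get $n\leq 3^k-1$. You simply spell out the justification for excluding $\mathbf{0}$ that the paper leaves implicit.
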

\begin{proof}
All vectors $\zeta(v)$ must be distinct and non-zero.  In $\Omega^k$ there are $3^k-1$ distinct non-zero vectors.  Therefore, $n\leq 3^k-1$, from which the result follows.
\end{proof}

It would be interesting to know whether there are many signed graphs for which the lower bound given in the previous theorem is attained, i.e., $\sbdim(\Sigma) = \lceil\log_3(n+1)\rceil$.
We present one such graph.  Consider the unbalanced cycle $C_8^-=v_1e_1v_2\cdots v_8e_8v_1$, having only one positive edge $e_8$. The function $\zeta:V(C_8^-)\rightarrow\Omega^2$ given by $\zeta(v_1)=(1,0)$, $\zeta(v_2)=(-1,0)$, $\zeta(v_3)=(1,-1)$, $\zeta(v_4)=(-1,1)$, $\zeta(v_5)=(0,-1)$, $\zeta(v_6)=(0,1)$, $\zeta(v_7)=(-1,-1)$ and $\zeta(v_8)=(1,1)$ is injective and switches $C_8^-$ to all positive. Thus $\sbdim(C_8^-)\leq2=\log_3(9)$ and equality follows by Theorem \ref{logbound}.


\section{Some Classes of Signed Graphs}
In this section, we compute the balancing and strong balancing dimensions of certain classes of unbalanced signed graphs.

\subsection{Cycles and wheels}\label{wheel}
\begin{prop}\label{lem1}
For an unbalanced cycle $C_n^-$,
$\bdim(C_n^-)= 3$ if $n=3$, and $2$ if $n>3$.
\end{prop}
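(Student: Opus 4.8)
The plan is to prove the two cases separately, using the lower bounds already established and exhibiting explicit $k$-switching functions for the upper bounds.

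For the case $n=3$, the signed cycle $C_3^-$ is a negative triangle, so Theorem~\ref{negtriangle} immediately gives $\bdim(C_3^-)\geq 3$. For the matching upper bound, I would exhibit an explicit $3$-switching function $\zeta:V(C_3^-)\rightarrow\Omega^3$ making all three edges positive. A natural choice is to assign to the three vertices vectors that are pairwise non-orthogonal but whose pairwise inner products carry the right sign to cancel the edge signs. For instance, something like $\zeta(v_1)=(1,1,0)$, $\zeta(v_2)=(0,1,1)$, $\zeta(v_3)=(1,0,-1)$ can be checked directly; the point is only that three pairwise non-orthogonal lines that mutually ``disagree'' exist in $\Omega^3$ but not in $\Omega^2$, which is exactly the content of the negative-triangle obstruction. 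So $\bdim(C_3^-)=3$.

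For the case $n>3$, the lower bound $\bdim(C_n^-)\geq 2$ follows from Theorem~\ref{thm4}: an unbalanced signed graph is not balanced, hence cannot have balancing dimension $1$. For the upper bound I would construct a $2$-switching function $\zeta:V(C_n^-)\rightarrow\Omega^2$. Since $\bdim$ is $1$-switching invariant (Theorem~\ref{thm5}), I may first switch $C_n^-$ to the standard form in which exactly one edge, say $v_nv_1$, is negative and all others are positive. Now I need to assign vectors so that consecutive vertices have positive inner product along all the positive edges and negative inner product across the single negative edge. The idea is to walk around the path $v_1,v_2,\ldots,v_n$ choosing vectors from the four non-orthogonal lines available in $\Omega^2$, keeping consecutive inner products positive, and arrange the ``winding'' so that the single wrap-around edge $v_nv_1$ lands on a pair with negative inner product. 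Concretely, using the vectors $(1,0),(1,1),(0,1),(-1,1),(-1,0),\ldots$ one can rotate step-by-step through $\Omega^2$; the existing Example of $C_4^-$ and the $C_8^-$ computation in the excerpt show exactly this kind of rotation works, and it generalizes to any $n>3$.

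The main obstacle is the upper-bound construction for general $n>3$: I must verify that a single consistent assignment realizes all edge signs simultaneously, and that the construction genuinely works for \emph{every} $n>3$ rather than just the small cases illustrated. The cleanest way to handle this is to give a uniform rule, for example splitting into parity cases or defining $\zeta(v_i)$ by a simple formula (such as alternating between two non-orthogonal vectors for the interior of the path and a twist at one end to produce the negative edge), and then checking the inner product sign on each of the $n$ edges. Because only sign patterns matter and $\Omega^2$ offers enough freedom (four lines, each giving two vectors of opposite sign), I expect this verification to be routine once the formula is fixed; the real work is choosing the formula so the wrap-around edge is forced negative while all path edges stay positive.
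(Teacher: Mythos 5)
Your overall strategy is the same as the paper's: reduce to the one-negative-edge standard form via $1$-switching invariance (Theorem~\ref{thm5}), obtain the lower bounds from Theorem~\ref{negtriangle} (for $n=3$) and Theorem~\ref{thm4} (for $n>3$), and exhibit explicit switching functions for the upper bounds. Your $n=3$ case is complete: the vectors $(1,1,0),(0,1,1),(1,0,-1)$ have inner-product sign pattern $(+,-,+)$, which matches a labeling of $C_3^-$ whose negative edge is $v_2v_3$. The genuine gap is in the case $n>3$: you never fix a formula, and both rules you hint at would fail if implemented literally. View the eight nonzero vectors of $\Omega^2$ as lying at angles that are multiples of $45^\circ$; an inner product is positive, zero, or negative according as the angle between the vectors is less than, equal to, or greater than $90^\circ$. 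A uniform step-by-step rotation places $\zeta(v_n)$ at angle $45(n-1)^\circ$ from $\zeta(v_1)$, which gives the wrong sign or even orthogonality for many $n$: for $n=7$ the wrap-around pair is exactly $90^\circ$ apart (so $\zeta$ is not even a valid switching function), and for $n=8,9$ the wrap-around inner product is positive, not negative. Likewise, ``alternating two non-orthogonal vectors in the interior with a twist at one end'' cannot work: if $\zeta(v_1)$ and $\zeta(v_{n-1})$ are at most $45^\circ$ apart, the directions having positive inner product with $\zeta(v_{n-1})$ and negative inner product with $\zeta(v_1)$ form an open $45^\circ$ sector whose endpoints are multiples of $45^\circ$, hence it contains no vector of $\Omega^2$. (Also, the $C_8^-$ computation you cite concerns the sign pattern with seven negative edges, where \emph{consecutive} inner products are negative; it is not the rotation you describe.)

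The fix, which is the paper's construction, distributes the twist over \emph{both} ends of the path: $\zeta(v_1)=(1,0)$, $\zeta(v_2)=(1,1)$, $\zeta(v_i)=(0,1)$ for $3\le i\le n-1$, and $\zeta(v_n)=(-1,1)$. Consecutive angle differences are $45^\circ$ or $0^\circ$, so all path edges (the positive edges) get positive inner products, while $\zeta(v_n)$ and $\zeta(v_1)$ are $135^\circ$ apart, giving the required negative inner product on the negative edge $v_nv_1$. Note that even small perturbations break this: taking $\zeta(v_1)=(1,1)$ instead yields $\langle\zeta(v_n),\zeta(v_1)\rangle=0$. So the verification is not ``routine once the formula is fixed''; arranging every consecutive pair strictly within $90^\circ$ while the two ends exceed $90^\circ$, using only vectors at multiples of $45^\circ$, is precisely the content of the upper bound, and it is the one step your proposal leaves open.
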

\begin{proof}
	 Since balancing dimension is $1$-switching invariant, we have only to consider a signed cycle $C_n^-=v_1e_1v_2\cdots v_ne_nv_1$ where $\sigma(e_n)=-1$ and other edges $e_i$ are all positive.

	 If $n>3$, define $\zeta:V(C_n^-)\rightarrow \Omega^2$ by $\zeta(v_1)=(1,0)$, $\zeta(v_2)=(1,1)$, $\zeta(v_n)=(-1,1)$ and for $i=3,4,\dots, n-1$, $\zeta(v_i)=(0,1)$. This is the required $2$-switching for making $\bdim(C_n^-)=2$.

	 We know $\bdim(C_3^-) \geq 3$ by Theorem \ref{negtriangle}.
	Define  $\zeta:V(C_3^-)\rightarrow \Omega^3$ by $\zeta(v_1)=(1,0,0)$, $\zeta(v_2)=(1,1,1)$ and $\zeta(v_3)=(-1,1,1)$. This $3$-switching function shows that $\bdim(C_3^-)\leq3$.
\end{proof}

\begin{rmk}
	The switching function $\zeta$ defined for $C_3^-$ in Proposition~\ref{lem1} is injective and hence  $\sbdim(C_3^-)=3$. If $C_3^-$ is all negative, then the injective switching function  $\zeta:V(C_3^-)\rightarrow \Omega^3$ by $\zeta(v_1)=(-1,1,1)$, $\zeta(v_2)=(1,-1,1)$ and $\zeta(v_3)=(1,1,-1)$ switches $C_3^-$ to all positive.
	Thus, $C_3^-$ gives us an example for which the bound given in Theorem~\ref{thm7} is attained.
\end{rmk}

\begin{ex}\label{ex2}
The balancing dimension  of a signed unicyclic graph $\Sigma$ is that of the unique cycle $C$ in it. Let $\zeta:V(C)\rightarrow \Omega^k$ be the switching function for $C$. We can extend $\zeta$ to $V(\Sigma)$ by adding pendant edges. Hence, by Theorem~\ref{thm9},  $\bdim(\Sigma)=\bdim(C)$.
\end{ex}

The wheel with $n$ spokes is denoted by $W_{n+1}$.
\begin{prop}\label{prop1}
	For an antibalanced signed wheel $W_{n+1}^-$ with $n\geq3$, $\bdim(W_{n+1}^-)=3$.
\end{prop}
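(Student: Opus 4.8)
The plan is to prove the two inequalities $\bdim(W_{n+1}^-)\ge 3$ and $\bdim(W_{n+1}^-)\le 3$ separately. Since $\bdim$ is $1$-switching invariant (Theorem~\ref{thm5}) and an antibalanced signed graph switches to its all-negative representative, I would first reduce to the case where $W_{n+1}^-$ has every edge negative, both the $n$ spokes and the $n$ rim edges. For the lower bound, observe that the hub together with any two cyclically consecutive rim vertices $v_i,v_{i+1}$ forms a triangle whose three edges are all negative, so its sign product is $(-1)^3=-1$; this is a negative triangle, and Theorem~\ref{negtriangle} immediately gives $\bdim(W_{n+1}^-)\ge 3$. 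This part is routine and works for every $n\ge 3$.

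For the upper bound I would exhibit an explicit $3$-switching function. Writing the hub as $v_0$ and the rim as $v_1,\dots,v_n$ in cyclic order, and recalling that $\sigma$ is identically $-1$, the requirement that $\zeta\colon V\to\Omega^3$ switch $\Sigma$ to all positive is exactly that $\langle\zeta(u),\zeta(v)\rangle<0$ for every edge $uv$. The key observation is that the four vectors
\[
h=(1,1,1),\quad D_1=(-1,-1,1),\quad D_2=(-1,1,-1),\quad D_3=(1,-1,-1)
\]
in $\Omega^3$ pairwise have inner product $-1$; this is simply the all-negative $K_4$ configuration. I would set $\zeta(v_0)=h$ and assign to each rim vertex one of $D_1,D_2,D_3$. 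Every spoke then has inner product $\langle h,D_i\rangle=-1<0$ automatically, so it remains only to control the rim.

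The rim constraint is that cyclically consecutive vertices receive \emph{distinct} labels from $\{D_1,D_2,D_3\}$, since distinct $D$'s have inner product $-1$ while $\langle D_i,D_i\rangle=3>0$. Thus I must produce, for each $n\ge 3$, a cyclic word of length $n$ over the alphabet $\{D_1,D_2,D_3\}$ with no two cyclically adjacent letters equal; equivalently, a closed walk of length $n$ in the triangle $K_3$. The only real point to check is the parity/closure issue at the wrap-around edge $v_nv_1$: for $n$ divisible by $3$ I would repeat $D_1,D_2,D_3$, and in general I would alternate $D_1,D_2,D_1,D_2,\dots$ and insert a single $D_3$ near the end to break the parity obstruction, so that both $v_{n-1}v_n$ and $v_nv_1$ get distinct labels. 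This closure is the only part needing care, but it is elementary because $K_3$ admits closed walks of every length $\ge 3$. Combining the two bounds yields $\bdim(W_{n+1}^-)=3$.
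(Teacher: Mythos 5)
Your proposal is correct and follows essentially the same route as the paper: after reducing by $1$-switching invariance (Theorem~\ref{thm5}) to a canonical antibalanced representative, both proofs assign $(1,1,1)$ to the hub and assign the rim vertices three pairwise compatible vectors of $\Omega^3$ so that the rim labels form a proper $3$-coloring of $C_n$, with a small case analysis on $n$ to handle the cyclic wrap-around. The only differences are cosmetic: the paper normalizes to negative rim and positive spokes rather than the all-negative wheel (so its rim vectors $(-1,1,1),(1,1,-1),(1,-1,1)$ are exactly the negatives of your $D_i$), and it leaves the lower bound from the negative triangle (Theorem~\ref{negtriangle}) implicit, whereas you state it explicitly.
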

\begin{proof}
	Since balancing dimension is $1$-switching invariant, we let $(W_{n+1}^-,\sigma)=(C_n\vee K_1,\sigma)$ with the sign function $\sigma$ given by  $\sigma(e)=-1$ if and only if $e\in E(C_n)$.
	 Let $C_n=v_1v_2\cdots v_n$ and $v_{n+1}=K_1$.
	 Define $\zeta:V(W_{n+1}^-)\rightarrow \Omega^3$ as follows: Choose $\zeta(v_{n+1})=(1,1,1)$. If $n=3k$ or $3k+2$,  assign $\zeta(v_{1})=(-1,1,1)$ and for $i=2,3,\dots, n$, $\zeta(v_i)$ is obtained by performing one left circular shift to $\zeta(v_{i-1})$. If  $n=3k+1$,
	 assign $\zeta(v_{1})=(-1,1,1)$, $\zeta(v_{n})=(1,1,-1)$ and for $i=2,3,\dots, n-1$, $\zeta(v_i)$ is obtained by performing one left circular shift to $\zeta(v_{i-1})$.
\end{proof}

\begin{rmk}
	For the antibalanced signed wheel $W^-_4$ defined above,  $\sbdim(W_4^-)=3$ since the switching function $\zeta$ defined in the proof of Proposition~\ref{prop1} is injective.
\end{rmk}

\subsection{Complete graphs and antibalanced signed graphs}\label{complete}
 We now focus on the balancing dimension of unbalanced signed complete graphs. Since any unbalanced signed complete graph $\Sigma$ contains $C_3^-$ as a subgraph, $\bdim(\Sigma)\geq3$. The following is an example in which the lower bound for balancing dimension is attained.
\begin{ex}\label{knone}
	Let $\Sigma$ be a signed complete graph with $n\ge3$ vertices $v_1, v_2, \dots, v_n$ and having only one negative edge, say $v_1v_n$. Then $\Sigma$ is unbalanced.	Define  $\zeta:V(\Sigma)\rightarrow \Omega^3$ by, $\zeta(v_{1})=(-1,1,1)$, $\zeta(v_{n})=(1,1,-1)$ and for $i=2,3,\dots, n-1$, $\zeta(v_i)=(1,1,1)$. Then, $\zeta$ switches $\Sigma$ to all positive and hence $\bdim(\Sigma)=3$.
\end{ex}

\begin{ex}\label{lem3}
We provide a class of signed graphs in which the balancing dimension and strong balancing dimension coincide:
if $\Sigma$ is an all  negative signed complete  graph, then  $\bdim(\Sigma)=\sbdim(\Sigma)$.

	For the proof suppose $\bdim(\Sigma)=n$ and let $\zeta:V(\Sigma)\rightarrow \Omega^n$ be the corresponding switching function. If $\zeta(v_i)=\zeta(v_j)$ for some $i\neq j$, then $\sgn (\langle \zeta(v_i), \zeta (v_j)\rangle) =+1$ and hence $\sigma^\zeta(v_iv_j)=\sigma(v_iv_j) \sgn (\langle \zeta(v_i), \zeta (v_j)\rangle)=-1$, which is a contradiction. Thus $\zeta$ is injective and hence $\bdim(\Sigma)=\sbdim(\Sigma)$.

Note that these are not the only signed graphs satisfying $\bdim(\Sigma)=\sbdim(\Sigma)$ (see Example~\ref{ex1}).
\end{ex}

\begin{ex} The relationship between balancing dimensions of $\Sigma$ and $-\Sigma$ is an obvious question. We found that there exist signed graphs satisfying $\bdim(-\Sigma)=\bdim(\Sigma)$. Similarly, there exist signed graphs satisfying $\bdim(-\Sigma)\neq\bdim(\Sigma)$.

		(i) Every bipartite signed graph $\Sigma$ satisfies $\bdim(-\Sigma)=\bdim(\Sigma)$ since $\Sigma$ can be $1$-switched to $-\Sigma$ and $\bdim$ is $1$-switching invariant.

		The result doesn't hold for the $\sbdim$. For example, if we consider $\Sigma$ as the all positive tree with $3$ vertices, then $\sbdim(\Sigma)=1$ and $\sbdim(-\Sigma)=2$.

		(ii) Let $\Sigma$ be an odd unbalanced cycle. Then $-\Sigma$ is balanced and hence $\bdim(-\Sigma)=1<\bdim(\Sigma)$.
\end{ex}

\begin{defn}[\cite{nip}]  Let $W$ be a nonempty subset of a vector space over the field of real numbers. $W$ is called a negative inner product (NIP) set if $\langle \alpha, \beta \rangle < 0$ for all $\alpha$ and $\beta$ in $W$ with $\alpha \neq \beta$.
\end{defn}

\begin{lem} [\cite{nip}]\label{lem4}
	In a $ k $-dimensional vector space, there are at most $ k+1 $ vectors in an NIP set.
\end{lem}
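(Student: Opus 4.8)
The plan is to prove the bound $|W| \le k+1$ for a negative inner product set $W$ in a $k$-dimensional real vector space by a standard linear-algebra argument showing that any such set is linearly independent after removing at most one vector. First I would set $W = \{\alpha_1, \ldots, \alpha_m\}$ and suppose toward a contradiction that the vectors are linearly dependent in a way that is incompatible with the NIP property. The key tool is to examine a nontrivial linear dependence $\sum_i c_i \alpha_i = \mathbf{0}$, split the index set according to the sign of the coefficients, and move all negative-coefficient terms to the other side to obtain a vector equation $x = \sum_{i \in P} c_i \alpha_i = \sum_{j \in N} (-c_j)\alpha_j$ where $P$ indexes the positive coefficients and $N$ the negative ones, with all coefficients appearing now nonnegative.

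The main step is to compute $\langle x, x\rangle = \langle \sum_{i\in P} c_i\alpha_i,\ \sum_{j\in N}(-c_j)\alpha_j\rangle = \sum_{i\in P}\sum_{j\in N} c_i(-c_j)\langle \alpha_i,\alpha_j\rangle$. Because $i \in P$ and $j \in N$ are distinct indices, the NIP hypothesis gives $\langle \alpha_i, \alpha_j\rangle < 0$, while $c_i > 0$ and $-c_j > 0$; hence every term in this double sum is negative or zero, forcing $\langle x,x\rangle \le 0$, so $x = \mathbf{0}$. One then argues that having $\sum_{i\in P} c_i\alpha_i = \mathbf{0}$ with all $c_i > 0$ is itself impossible when $|P| \ge 2$, again by pairing up terms: $0 = \langle x, \alpha_\ell\rangle = c_\ell\langle\alpha_\ell,\alpha_\ell\rangle + \sum_{i \in P, i\neq \ell} c_i\langle\alpha_i,\alpha_\ell\rangle$, where the first term is positive and the rest are negative, and a careful summation over $\ell \in P$ yields a contradiction. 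The upshot is that in any minimal dependence at most one side is nonempty, which after some bookkeeping shows $W \setminus \{\alpha_0\}$ is linearly independent for a suitable $\alpha_0$, giving $|W| - 1 \le k$.

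I expect the main obstacle to be handling the edge cases cleanly rather than the central inequality. Specifically, one must rule out the possibility that a dependence lives entirely on one side (all coefficients of the same sign), and one must treat the situation where $\mathbf{0} \in W$ or where $W$ contains near-degenerate configurations; the sign bookkeeping in partitioning $P$ and $N$ and then showing the one-sided case collapses is where errors typically creep in. Since this result is cited from \cite{nip}, I would most likely present only a brief sketch along these lines or simply invoke the reference, noting that the essential mechanism is the strict negativity of cross inner products forcing linear independence up to a single vector.
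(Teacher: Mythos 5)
The paper itself gives no proof of this lemma; it is imported verbatim from the cited reference \cite{nip}, so your fallback option of simply invoking the citation is exactly what the paper does. But the sketch you offer in place of a proof contains a genuine error, and it sits precisely at the step you dismiss as bookkeeping. The two-sided case is fine: if a nontrivial dependence $\sum_i c_i\alpha_i=\mathbf{0}$ has both positive and negative coefficients, then $x=\sum_{i\in P}c_i\alpha_i=\sum_{j\in N}(-c_j)\alpha_j$ gives $\langle x,x\rangle$ as a sum of \emph{strictly} negative terms $c_i(-c_j)\langle\alpha_i,\alpha_j\rangle$, which is impossible; so one of $P,N$ must be empty (strictness matters here --- the correct conclusion is that the dependence is one-sided, not merely that $x=\mathbf{0}$). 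The gap is your treatment of the one-sided case: the claim that $\sum_{i\in P}c_i\alpha_i=\mathbf{0}$ with all $c_i>0$ and $|P|\ge 2$ is impossible is false. Three unit vectors in $\mathbb{R}^2$ at mutual angles of $120^\circ$ form an NIP set and satisfy $\alpha_1+\alpha_2+\alpha_3=\mathbf{0}$; so does $\{1,-1\}$ in $\mathbb{R}^1$. Accordingly, your identity $0=\langle x,\alpha_\ell\rangle=c_\ell\langle\alpha_\ell,\alpha_\ell\rangle+\sum_{i\in P,\,i\neq\ell}c_i\langle\alpha_i,\alpha_\ell\rangle$ yields no contradiction --- it merely says one positive term balances several negative ones --- and no summation over $\ell\in P$ can produce one, because the configuration genuinely exists. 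Extremal NIP sets of size $k+1$ typically \emph{do} carry an all-positive dependence.

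The correct repair needs a vector outside the dependence. Suppose $|W|\ge k+2$. Some $k+1$ of the vectors, say $\alpha_1,\dots,\alpha_{k+1}$, are linearly dependent, and by the two-sided argument the dependence may be taken one-sided: $\sum_{i\in P}c_i\alpha_i=\mathbf{0}$ with all $c_i>0$ and $P\subseteq\{1,\dots,k+1\}$. Now pair against the leftover vector $\alpha_{k+2}$: $0=\bigl\langle \sum_{i\in P}c_i\alpha_i,\ \alpha_{k+2}\bigr\rangle=\sum_{i\in P}c_i\langle\alpha_i,\alpha_{k+2}\rangle<0$, since every $\langle\alpha_i,\alpha_{k+2}\rangle<0$ by the NIP property. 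This contradiction gives $|W|\le k+1$. Equivalently, what is true (and provable this way) is that every \emph{proper} subset of an NIP set is linearly independent; that is the correct form of your ``remove one vector'' conclusion, and it cannot be obtained by working inside the support of the dependence alone.
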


 \begin{defn}
 We define $\nu(k)$ to be the largest size of an NIP set in $\Omega^k$.  Thus, $\nu(k)\leq k+1$.  We define $\bar\nu(n) = \min\{k : \nu(k)\geq n\}$.
 \end{defn}

It is easy to see that $\nu(2)=2$ but $\nu(3)$ is not as easy to determine.  Gary Greaves kindly provided the following numbers, which he computed with SageMath:

\begin{tabular}{c|r|r|r|r|r|r|r}
$k$ & 2 & 3 & 4 & 5 & 6 & 7 \\
\hline
$\nu(k)$ & 2 & 4 & 4 & 5 & 5& 8
\end{tabular}

\noindent
From this table we obtain values of  $\bar\nu(n)$:

\begin{tabular}{c|r|r|r|r|r|r|r|}
$n$ & 2 & 3 & 4 & 5 & 6 & 7 & 8\\
\hline
$\bar\nu(n)$ & 2 & 3 & 3 & 5 & 5 & 5 & 7
\end{tabular}
\\

\begin{lem}\label{NIPnu}
$\nu(k)\geq n$ if and only if $k \geq \bar\nu(n)$.  In particular, $\bar\nu(n) \geq n-1$.
\end{lem}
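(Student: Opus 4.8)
The plan is to prove the two claims separately, starting from the definitions $\nu(k) = $ largest size of an NIP set in $\Omega^k$ and $\bar\nu(n) = \min\{k : \nu(k) \geq n\}$. The statement $\nu(k) \geq n \iff k \geq \bar\nu(n)$ is essentially the assertion that $\nu$ is monotone nondecreasing, which lets the two quantities act as inverse functions. So the first step is to establish monotonicity of $\nu$: I claim $\nu(k+1) \geq \nu(k)$ for all $k \geq 2$. This follows by the same padding idea used in Lemma~\ref{up}: given an NIP set $\{\alpha_1,\ldots,\alpha_r\} \subset \Omega^k$, append a zero coordinate to each vector to obtain $\alpha_i' = (\alpha_i, 0) \in \Omega^{k+1}$. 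Inner products are unchanged, so $\{\alpha_1',\ldots,\alpha_r'\}$ is an NIP set of the same size in $\Omega^{k+1}$, giving $\nu(k+1) \geq r$; taking $r = \nu(k)$ yields monotonicity.

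With monotonicity in hand, the biconditional is immediate. For the forward direction, suppose $\nu(k) \geq n$; then $k$ belongs to the set $\{k' : \nu(k') \geq n\}$ whose minimum is $\bar\nu(n)$, so $k \geq \bar\nu(n)$. For the reverse direction, suppose $k \geq \bar\nu(n)$; since $\nu(\bar\nu(n)) \geq n$ by definition of $\bar\nu(n)$ as the minimizer, monotonicity gives $\nu(k) \geq \nu(\bar\nu(n)) \geq n$. This closes the equivalence.

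For the final inequality $\bar\nu(n) \geq n-1$, the idea is to combine the equivalence just proved with the upper bound on NIP sets. By Lemma~\ref{lem4}, any NIP set in a $k$-dimensional space has at most $k+1$ vectors, so $\nu(k) \leq k+1$ for every $k$. Now apply the equivalence with $k = \bar\nu(n)$: since $\nu(\bar\nu(n)) \geq n$, we get $n \leq \nu(\bar\nu(n)) \leq \bar\nu(n) + 1$, and rearranging gives $\bar\nu(n) \geq n-1$.

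I do not expect a genuine obstacle here, since every ingredient is already available in the excerpt; the only point requiring slight care is confirming that appending a zero coordinate keeps all the pairwise inner products strictly negative (it does, as the new coordinate contributes nothing to any inner product), so the padded set remains a valid NIP set rather than merely a set of distinct vectors. One should also note that $\bar\nu(n)$ is well-defined, i.e.\ the set $\{k : \nu(k) \geq n\}$ is nonempty: this is guaranteed because the standard construction realizing $n$ pairwise-negative unit-type vectors (for instance the regular-simplex directions) can be taken in $\Omega^{n-1}$, or more simply because monotonicity together with $\nu(k) \to \infty$ ensures the threshold is eventually crossed.
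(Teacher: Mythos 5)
Your proof of the two assertions in the lemma is correct and follows essentially the same route as the paper: unwind the definitions of $\nu$ and $\bar\nu$, and invoke Lemma~\ref{lem4}. You are in fact more careful than the paper on two counts. First, you prove the monotonicity $\nu(k+1)\geq\nu(k)$ explicitly by zero-padding; the paper uses this silently when it asserts that $k\geq\bar\nu(n)$ is equivalent to the existence of a size-$n$ NIP set in $\Omega^k$. Second, your instantiation $n \leq \nu(\bar\nu(n)) \leq \bar\nu(n)+1$ yields $\bar\nu(n)\geq n-1$ in one clean step, whereas the paper's choice (``$k=n-1$, so $\nu(n-1)\leq n$'') does not by itself force the inequality --- one needs the bound $\nu(n-2)\leq n-1$ at $k=n-2$, or your argument at $k=\bar\nu(n)$.

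There is, however, one genuine error, though it sits in your closing remark on well-definedness rather than in the main argument: the regular-simplex directions do \emph{not} lie in $\Omega^{n-1}$, and in fact no NIP set of size $n$ need exist in $\Omega^{n-1}$ at all --- the paper's own table shows $\nu(4)=4<5$ and $\nu(6)=5<7$, so the claim fails already for $n=5$ and $n=7$. Your fallback (monotonicity plus $\nu(k)\to\infty$) is the right idea but is asserted without a witness. One is available from the incidence-matrix construction of Section~\ref{sec1} applied to the all-negative complete graph: the vectors $\mu(v_1),\dots,\mu(v_n)\in\Omega^{\binom{n}{2}}$ satisfy $\langle\mu(v_i),\mu(v_j)\rangle=-1$ for all $i\neq j$, so they form an NIP set and $\nu\bigl(\tbinom{n}{2}\bigr)\geq n$, which makes $\{k:\nu(k)\geq n\}$ nonempty and $\bar\nu(n)$ well defined.
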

\begin{proof}
We restate the definition of $\bar\nu(n)$ as the minimum $k$ such that there exists an NIP set of $n$ elements in $\Omega^k$.  Thus, $k\geq\bar\nu(n)$ if and only if an NIP set of size $n$ exists in $\Omega^k$.  This is equivalent to saying that $n \leq \nu(k)$.

Choosing $k=n-1$, we have $\nu(n-1)\leq n$ so, equivalently, $\bar\nu(n) \geq n-1$.
\end{proof}

\begin{thm}\label{thm8}
	Let $\Sigma$ be an antibalanced signed complete graph on $n$ vertices, where $n\geq2$. Then
	$\bdim(\Sigma) = \bar\nu(n) \geq n-1$.
\end{thm}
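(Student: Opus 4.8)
The plan is to reduce the antibalanced complete graph to the all-negative complete graph $-K_n$ and then to recognize that switching $-K_n$ to all positive is literally the problem of finding a negative inner product set of size $n$ in $\Omega^k$. First I would invoke antibalance: by the discussion following Theorem~\ref{thm1}, an antibalanced signed graph $1$-switches to all negative signs, and since the underlying graph is $K_n$ this means $\Sigma$ $1$-switches to $-K_n$. Because $\bdim$ is $1$-switching invariant by Theorem~\ref{thm5}, we get $\bdim(\Sigma) = \bdim(-K_n)$, so it suffices to compute $\bdim(-K_n)$.

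Next I would establish the crucial equivalence: a $k$-switching function $\zeta : V \to \Omega^k$ switches $-K_n$ to all positive if and only if the images $\zeta(v_1),\ldots,\zeta(v_n)$ form an NIP set of size $n$ in $\Omega^k$. For the forward direction, every pair of vertices is joined by a negative edge, so the requirement $\sigma^\zeta(v_iv_j) = \sigma(v_iv_j)\,\sgn\langle \zeta(v_i),\zeta(v_j)\rangle = +1$ forces $\sgn\langle \zeta(v_i),\zeta(v_j)\rangle = -1$, i.e.\ $\langle \zeta(v_i),\zeta(v_j)\rangle < 0$ for all $i \neq j$; this is exactly the NIP condition. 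Conversely, given any NIP set $\{w_1,\ldots,w_n\} \subset \Omega^k$ of size $n$, assigning $\zeta(v_i) = w_i$ produces a legitimate switching function (the inner products are nonzero on every edge) that makes all edges positive. The conclusion $\bdim(-K_n) = \bar\nu(n)$ then follows by definition, since $\bdim(-K_n)$ is the least $k$ for which $\Omega^k$ admits an NIP set of size $n$, and this least $k$ is precisely $\bar\nu(n)$ as restated in the proof of Lemma~\ref{NIPnu}. The bound $\bar\nu(n) \geq n-1$ is then immediate from Lemma~\ref{NIPnu}.

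I do not expect a serious obstacle in this argument, as the heavy lifting has already been isolated into the notion of $\nu$ and $\bar\nu$. The one point that needs explicit care is that the NIP condition automatically forces the $n$ image vectors to be \emph{distinct}: if $\zeta(v_i) = \zeta(v_j)$ for some $i \neq j$, then $\langle \zeta(v_i),\zeta(v_j)\rangle = \|\zeta(v_i)\|^2 \geq 0$, contradicting negativity. Thus the $n$ images genuinely constitute an NIP set of full size $n$ rather than a multiset of smaller support, which is what makes the correspondence with $\bar\nu(n)$ exact.
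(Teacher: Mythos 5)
Your proposal is correct and follows essentially the same route as the paper's proof: reduce to the all-negative complete graph via $1$-switching invariance, identify positive $k$-switching functions of $-K_n$ with NIP sets of size $n$ in $\Omega^k$, and conclude via the definition of $\bar\nu$ together with Lemma~\ref{NIPnu}. Your explicit verification that the NIP condition forces the $n$ image vectors to be distinct is a point the paper leaves implicit, but it is a refinement of the same argument, not a different one.
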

\begin{proof}
	By $1$-switching as necessary assume $\Sigma$ is all negative. For switching $\Sigma$ to all positive, we must assign each vertex of $\Sigma$ one element from an NIP set with elements in $\Omega^k$ for some $k$. Thus, by Lemma \ref{lem4}, it is necessary and sufficient that $n\leq \nu(k)$; equivalently by Lemma \ref{NIPnu}, $\bar\nu(n) \leq k$.  It follows that the smallest possible $k$ is $\bar\nu(n) \geq n-1$.
\end{proof}

\begin{ex}\label{knsw}
	Let $\Sigma$ be the all-negative signed complete graph on $5$ vertices. 
	Then by Theorem~\ref{thm8}, we have $\bdim(\Sigma)\geq 4$. Let us define a $3$-switching function $\mu:V(\Sigma)\rightarrow \Omega^3$ as follows. Let $\mu(v_1)=\mu(v_2)=\mu(v_3)=(1,1,1)$,  $\mu(v_4)=(-1,1,-1)$ and $\mu(v_5)=(-1,-1,1)$. We will show that the switched signed graph $\Sigma^\mu$ has balancing dimension $3$. Since $\Sigma^\mu$ contains $C_3^-$ as a subgraph, $\bdim(\Sigma^\mu)\geq 3$. Now, the function $\zeta:V(\Sigma^\mu)\rightarrow\Omega^3$, defined by $\zeta(v_1)=(1,1,-1)$, $\zeta(v_2)= (-1,1,1)$, $\zeta(v_3)=(1,-1,1)$ and  $\zeta(v_4)= \zeta(v_5)=(1,1,1)$ switches $\Sigma^\mu$ to all positive. Hence $\bdim(\Sigma^\mu)=3$.
\end{ex}
 This example leads us to the following conclusions.
 \begin{enumerate}
 	\item Though balancing dimension is $1$-switching invariant, the same need not be true for a general $k$-switching, where $k\geq2$.
 	\item If $\bdim(\Sigma)=n$, then for $k=2,3,\dots, n-1$, $k$-switching need not leave the  signs of all cycles unchanged.
 \end{enumerate}


\section*{Acknowledgements}
The second author would like to acknowledge his gratitude to the University Grants Commission (UGC), India, for providing financial support in the form of Junior Research fellowship (NTA Ref.\ No.: 191620039346).  The last author extends his gratitude to Christopher Eppolito for valuable discussion.

The authors express their gratitude to Gary Greaves for computing values of $\nu(k)$ in section \ref {complete}.


\end{document}